\documentclass[12pt,reqno]{amsart}

\usepackage{amsmath,amsthm,amscd,amsfonts,amssymb}
\usepackage[utf8]{inputenc}
\usepackage{graphicx}
\usepackage[colorlinks=true, allcolors=blue]{hyperref}
\usepackage[all]{xypic}
\usepackage{enumerate}

\setlength{\textheight}{8.7in}
\setlength{\textwidth}{6in}
\setlength{\oddsidemargin}{.26in}
\setlength{\evensidemargin}{.26in}
\parskip=.16in

\numberwithin{equation}{subsection}

\setcounter{tocdepth}{1}

\newcommand{\sqsp}{\renewcommand{\baselinestretch}{1.1}\tiny\normalsize}

\raggedbottom
\tolerance=9000
\hbadness=10000
\hfuzz=1.5pt
\setcounter{secnumdepth}{3}

\newtheorem{thm}[subsection]{Theorem}
\newtheorem{lem}[subsection]{Lemma}
\newtheorem{prop}[subsection]{Proposition}
\newtheorem{cor}[subsection]{Corollary}
\theoremstyle{definition}
\newtheorem{defn}[subsection]{Definition}
\newtheorem{exs}[subsection]{Examples}
\newtheorem{rem}[subsection]{Remark}

\newcommand{\m}{\mathfrak{m}}
\newcommand{\g}{\mathfrak{g}}
\newcommand{\s}{\mathfrak{s}}
\newcommand{\so}{\mathfrak{so}}
\newcommand{\HS}{\mathrm{HS}}

\newcommand{\SSH}{\mathcal{SS}}
\newcommand{\RS}{\mathcal{RS}}
\newcommand{\Ss}{\mathcal{SS}^*}
\newcommand{\PS}{\mathcal{PS}}
\newcommand{\MS}{\mathcal{MS}}
\newcommand{\A}{\mathcal{A}}
\newcommand{\F}{\mathbb{F}}

\newcommand{\C}{\mathbb{C}}
\newcommand{\ch}{\mathrm{char}(\F)}
\newcommand{\brk}{[\cdot,\cdot]}
\DeclareMathOperator{\End}{End}
\DeclareMathOperator{\Aut}{Aut}

\DeclareMathOperator{\Ker}{Ker}

\DeclareMathOperator{\im}{Im}
\DeclareMathOperator{\ad}{ad}
\DeclareMathOperator{\Span}{span}
\newcommand{\p}{\mathit{\Pi}}
\newcommand{\tp}{\widetilde{\mathit{\Pi}}}

\newcommand*{\cycl}{\mathop{\kern0.9ex{{+}\kern-2.77ex\raise-.6ex%
      \hbox{\huge\hbox{$\circlearrowleft$}}}}\limits}


\title{Classification problem of simple Hom-Lie algebras}
\author{Youness El Kharraf}                 

\begin{document}

\begin{abstract}
First, we construct some families of nonsolvable anticommutative algebras, solvable Lie algebras and even nilpotent Lie algebras, that can be endowed with the structure of a simple Hom-Lie algebra. This situation shows that a classification of simple Hom-Lie algebras would be unrealistic without any further restrictions. We introduce the class of \emph{strongly simple Hom-Lie algebras}, as the class of anticommutative algebras that are simple Hom-Lie with respect to all their twisting maps. We show some of its properties, provide a characterization and explore some of its subclasses. Furthermore, we provide a complete classification of regular simple Hom-Lie algebras over any arbitrary field, together with a description of a lower bound of the number of their isomorphism classes, which depends entirely on the finiteness or not of the underlying field. In addition, we establish that every simple anticommutative algebra of dimension $3$ turns out to be the outside Yau's twist of the special orthogonal Lie algebra $\mathfrak{so}(3,\F)$ with respect to some bijective linear map. Also, we determine all the simple Hom-Lie algebras of dimension $2$, up to conjugacy, which were wrongly claimed to be nonexistent in previous literature. Finally, we establish a new \emph{simplicity criterion} for Lie algebras, which as an application shows that the simplicity in the category of multiplicative Hom-Lie algebras is equivalent to that of anticommutative algebras.
\end{abstract}

\subjclass{Primary 17B61, Secondary 17A30, 17B20, 17B50, 17B60, 20G15.}    

\keywords{Classification, Hom-Lie algebras, regular simple Hom-Lie structure, minimal ideal, simplicity criterion, conjugacy classes, linear algebraic group, root system.}         
\address{Department of Mathematics, Faculty of Sciences, University of Moulay Ismail, P.O. 11201 Zitoune, Meknes, Morocco}
\email{yns.elkharraf@gmail.com}
\date{\today}
\maketitle
\sqsp


\section{Introduction}

For a long time, classification problems have arisen as natural quests with the wide expansion of mathematics. Such investigation often leads to a better and a deeper comprehension of the subject and frequently gives birth to new ideas and techniques, and even in some cases, allows breakthroughs on some hard questions, e.g \emph{Two Generators Theorem of simple Lie algebras in arbitrary characteristics}, cf. \cite{Boi09}. If we quoted some examples here of achieved classifications, there would be two : One, \emph{the classification of finite simple groups}, which is considered as one of the great intellectual achievements of humanity, and the other is \emph{the classification of simple Lie algebras over fields of characteristic different from $2$ and $3$}, which is a landmark achievement of modern mathematics. There is no need to recall the well-known roles and the importance that they play in almost every area of mathematics, especially in geometry and physics.

Hoping to do the same for the class of simple Hom-Lie algebras, even partially. Unfortunately, we show the existence of many aberrant cases, which makes a classification seems to be out of reach once it is considered without any further restrictions. 

In 2016 in \cite{CH16}, X. Chen and W. Han achieve the classification of regular simple Hom-Lie algebras over the field of complex numbers $\C$. However, their classification could be straightforward extended to any field of characteristic zero, as we will show it in the sequel. Further, by Block’s fundamental result on differentiably simple algebras, we could determine the structure theorem of regular simple Hom-Lie algebras over an arbitrary field. In addition, we describe the number of isomorphism classes of regular simple Hom-Lie structures on a given direct sum of copies of a simple Lie algebra. In fact we prove that this number depends entirely on the (in)finiteness the underlying field, i.e. it is finite if and only if the underlying field is finite too.

Also, we determine all the $2$-dimensional simple Hom-Lie algebras, which were wrongly claimed to be nonexistent in previous literature. On the other hand, we introduce a new interesting class of algebras that we call \emph{strongly simple Hom-Lie algebras}, it is defined as the class of anticommutative algebras that are simple Hom-Lie algebras with respect to every twisting map, i.e. every homomorphism of vector space satisfying the Hom-Jacobi identity. In this work, we characterize this class and we describe some of its properties along with some of its subclasses.  

Furthermore and as an illustration of our motivation in the classification problem of simple Hom-Lie algebras, we were able to establish a new simplicity criterion for Lie algebras, which has an immediate application by allowing us to show that a strongly simple Hom-Lie algebra is equivalent to a multiplicative simple Hom-Lie algebra, also equivalent to a simple anticommutative algebra. So that the definition of simplicity in the category of multiplicative Hom-Lie algebras coincides with the simplicity in the category of anticommutative algebras.

The article is organized as follows. In section $2$, we summarize the basic definitions and introduce the notions of inside and outside Yau's twists with respect to a linear map. In section $3$, we determine all the $2$-dimensional simple Hom-Lie algebras. In section $4$, we revisit the classification of multiplicative simple Hom-Lie algebras and extend it to any arbitrary field. In section $5$, we describe the number of isomorphism classes of regular simple Hom-Lie structures over a pair $(\g,n)$, where $\g$ is a simple Lie algebra and $n$ is the number of its copies, by different approaches. In section $6$, we provide a new simplicity criterion for Lie algebras. We construct many families of nonsimple anticommutative algebras that can be endowed with the structure of simple Hom-Lie algebras, then we introduce the class of strongly simple Hom-Lie algebras and we characterize it. The last section is devoted to some interesting subclasses of strongly simple Hom-Lie algebras. 

Throughout this article, there is no restriction on the field $\F$ and all the algebras are considered to be finite dimensional, unless it is stated otherwise.

\section{Preliminaries }
The notion of Hom-Lie algebras was first introduced by J. T. Hartwig, D. Larsson, S. D. Silvestrov in \cite{HLS06}, as it naturally emerged from the $q$-deformations of Witt and Virasoro algebras when substituting the one parameter $q$ with a homomorphism of vector space $\sigma$. 
Recall that an anticommutative algebra is an algebra such that its multiplication satisfies the anticommutativity condition, i.e. $x^2=0,$ for all $x$.
\begin{defn}\label{d1}
A \emph{Hom-Lie} algebra is a triple $(A,[\cdot,\cdot],\sigma)$ consisting of an anticommutative algebra $(A,[\cdot,\cdot])$ and a linear map $\sigma$ satisfying
  $$J_{\sigma,\brk}=\cycl_{x,y,z}{[\sigma(x),[y,z]]}=0,\quad \forall x,y,z\in A,\hspace{1.4cm} (\text{Hom-Jacobi identity})$$
where $J_{\sigma,\brk}: A^{\wedge^3}\longrightarrow A$ is a trilinear map, called the Hom-Jacobian.
The linear map $\sigma$ is called the \emph{twisting map} of the Hom-Lie algebra $A$. The set of all twisting maps on $A$ is noted by $\HS(A)$.
\end{defn}
\begin{rem}
There is a natural embedding of Lie algebras in the category Hom-Lie algebras, by augmenting them with the identity map.

\end{rem}
 
\begin{defn}
 A homomophism of Hom-Lie algebras $$\varphi:~(A,[\cdot,\cdot]^{A},\sigma_{A})\longrightarrow (B,[\cdot,\cdot]^{B},\sigma_{B})$$ is a homomorphism of the underlying anticommutative algebras (i.e. $\varphi\circ[\cdot,\cdot]^{A}=[\cdot,\cdot]^{B}\circ\varphi^{\otimes^2}$) satisfying, in addition, the compatibility condition : $\varphi\circ\sigma_{A}=\sigma_{B}\circ\varphi $.
\end{defn}

\begin{defn}
   Let $(A,[\cdot,\cdot],\sigma)$ be a Hom-Lie algebra. It is called to be
   \begin{itemize}
      \item \emph{Multiplicative}, if $ \sigma\circ\brk=\brk\circ\sigma^{\otimes^2}$. 
   \item \emph{Regular}, if $\sigma$ is an automorphism of anticommutative algebras.
   \end{itemize}
\end{defn}
  To avoid ambiguity, we introduce the notion of Hom-ideal.
\begin{defn}
  A Hom-ideal $I$ is an ideal in the usual sense which, in addition, is left invariant by the twisting map $\sigma$, i.e
  $[I,A]\subset I~~\text{and}~~\sigma(I)\subset I$.
\end{defn}
  
\begin{defn}
  A Hom-Lie algebra $(A,\left[\cdot,\cdot\right],\sigma)$ is called simple, if it is not abelian and has no proper Hom-ideal. It is called semisimple if its Hom-radical is zero, where a Hom-radical is the maximal solvable Hom-ideal.
\end{defn}

\begin{prop}[Yau's twisting principle]
 Let $(A,\brk,\sigma)$ be a Hom-Lie algebra and $\theta\in\End(A)$ such that $\theta\circ\brk=\brk\circ\theta^{\otimes^2}$. Then, $(A,\brk_\theta,\sigma_\theta)$ is also a Hom-Lie algebra, where $\brk_\theta:=\theta\circ\brk$ and $\sigma_\theta:=\theta\circ\sigma$.\\ In addition, if $\theta$ is invertible. Then, $(A,\brk,\sigma)$ is multiplicative (resp. regular) if and only if $(A,\brk_\theta,\sigma_\theta)$ is multiplicative (resp. regular).
\end{prop}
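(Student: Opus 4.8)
The plan is to check directly that the twisted triple $(A,\brk_\theta,\sigma_\theta)$ satisfies the two axioms of Definition~\ref{d1}, and only afterwards to examine the multiplicative/regular refinement. Anticommutativity is inherited for free: since $\brk_\theta=\theta\circ\brk$ and $[x,x]=0$ for all $x$, linearity of $\theta$ gives $[x,x]_\theta=\theta([x,x])=0$.

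For the Hom-Jacobi identity of $(A,\brk_\theta,\sigma_\theta)$ I would expand a single cyclic term and push everything through $\theta$ using the endomorphism hypothesis $[\theta a,\theta b]=\theta[a,b]$:
\[
[\sigma_\theta(x),[y,z]_\theta]_\theta=\theta\bigl[\theta\sigma(x),\theta[y,z]\bigr]=\theta^2\bigl[\sigma(x),[y,z]\bigr].
\]
Summing over the cyclic permutations of $(x,y,z)$ and factoring the linear map $\theta^2$ out of the sum yields $J_{\sigma_\theta,\brk_\theta}(x,y,z)=\theta^2\bigl(J_{\sigma,\brk}(x,y,z)\bigr)=\theta^2(0)=0$. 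So the first assertion reduces entirely to the Hom-Jacobi identity of the original structure; notably invertibility of $\theta$ is never used here, only that $\theta$ is an endomorphism of $(A,\brk)$.

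Now assume $\theta$ invertible and turn to multiplicativity. Writing out $\sigma_\theta\circ\brk_\theta=\brk_\theta\circ\sigma_\theta^{\otimes2}$ and applying it to a pair $(x,y)$ gives $\theta\sigma\theta[x,y]=\theta[\theta\sigma x,\theta\sigma y]$; cancelling one injective $\theta$ on the left and applying the endomorphism identity on each side, this is equivalent to
\[
\sigma[\theta x,\theta y]=\theta[\sigma x,\sigma y],\qquad \forall\, x,y\in A. \qquad (\ast)
\]
Rewriting $(\ast)$ with $\brk\circ\theta^{\otimes2}=\theta\circ\brk$ turns it into $\sigma\theta\circ\brk=\theta\circ\brk\circ\sigma^{\otimes2}$, and feeding in the original multiplicativity $\brk\circ\sigma^{\otimes2}=\sigma\circ\brk$ collapses the right-hand side, so that $(\ast)$ becomes $(\sigma\theta-\theta\sigma)\circ\brk=0$.

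This computation isolates what I expect to be the genuine obstacle: the equivalence ``$(A,\brk,\sigma)$ multiplicative $\Leftrightarrow$ $(A,\brk_\theta,\sigma_\theta)$ multiplicative'' holds precisely when $\sigma$ and $\theta$ commute on the image $[A,A]$ of the bracket, and the bare hypothesis $\theta\circ\brk=\brk\circ\theta^{\otimes2}$ does not by itself force this. In the situations where Yau's principle is applied one has the commutativity for free---most often $\theta$ is a power of $\sigma$, or more generally a morphism of the Hom-Lie algebra $(A,\brk,\sigma)$ satisfying $\theta\circ\sigma=\sigma\circ\theta$---and granting it, $(\ast)$ reads $\theta\sigma\circ\brk=\theta\circ\brk\circ\sigma^{\otimes2}$, whence cancelling the invertible $\theta$ recovers original multiplicativity, so that both implications hold simultaneously. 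The regular case then follows at no extra cost: regularity is multiplicativity together with bijectivity of the twisting map, and $\sigma_\theta=\theta\circ\sigma$ is bijective if and only if $\sigma$ is, $\theta$ being invertible.
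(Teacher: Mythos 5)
Your proof of the first assertion is correct and coincides with the paper's: the paper records only the identity $J_{\sigma_\theta,\brk_\theta}=\theta^{2}\circ J_{\sigma,\brk}=0$, and your expansion of a single cyclic term through the endomorphism property of $\theta$ is exactly the justification of that one line.

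For the second assertion you have not supplied a proof but isolated an obstruction, and the obstruction is genuine: as printed, with $\theta$ only assumed to be an invertible endomorphism of $(A,\brk)$, the multiplicativity and regularity equivalences are false. Your reduction is correct --- granting multiplicativity of $(A,\brk,\sigma)$ and invertibility of $\theta$, multiplicativity of $(A,\brk_\theta,\sigma_\theta)$ is equivalent to $(\sigma\circ\theta-\theta\circ\sigma)\circ\brk=0$, and the hypotheses do not force this. A counterexample over any field with at least three elements: let $A=\Span\{e_1,e_2,e_3,e_4,f_1,f_2\}$ with $[e_1,e_2]=f_1$, $[e_3,e_4]=f_2$ and all other brackets of basis vectors zero; since $[A,A]$ is central, every linear map satisfies the Hom-Jacobi identity. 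Take $\sigma$ to be the automorphism swapping $e_1\leftrightarrow e_3$, $e_2\leftrightarrow e_4$, $f_1\leftrightarrow f_2$, so that $(A,\brk,\sigma)$ is regular, and take $\theta$ to be the automorphism sending $e_1\mapsto ce_1$, $f_1\mapsto cf_1$ with $c\neq0,1$ and fixing the remaining basis vectors. Then $\sigma_\theta([e_1,e_2]_\theta)=\theta\sigma(cf_1)=cf_2$, whereas $[\sigma_\theta(e_1),\sigma_\theta(e_2)]_\theta=\theta[e_3,e_4]=f_2$, so the twisted triple is neither multiplicative nor regular. The paper's own proof does not meet this difficulty: it declares the forward implication ``clear'' and deduces the converse by twisting back with $\theta^{-1}$, which is legitimate only once the forward direction is secured. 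The standard repair --- and Yau's original hypothesis --- is to require in addition $\theta\circ\sigma=\sigma\circ\theta$, under which your identity $(\ast)$ is immediate and both directions hold; this stronger hypothesis is satisfied everywhere the proposition is invoked later in the paper (in Theorems \ref{t2}, \ref{t3} and \ref{t63} the twisting endomorphism is $\sigma$ itself, or one starts from a Lie algebra whose twisting map is the identity), so no downstream result is affected. Your closing remark on regularity, namely that it amounts to multiplicativity together with bijectivity of $\sigma_\theta=\theta\circ\sigma$, is also correct.
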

\begin{proof}
\begin{align*}
    J_{\sigma_\theta,\brk_\theta}=\theta^2\circ J_{\sigma,\brk}=0
\end{align*}
When $\theta$ is invertible, it's clear that the multiplicativity and the regularity are preserved by the Yau's twist of Hom-Lie algebras. Conversely, it suffices to consider the Yau's twist with respect to the inverse $\theta^{-1}$.
\end{proof}
\begin{rem}
We shall said, in general, that an algebra $(A,\brk')$ is the outside (resp. inside) Yau's twist of a Lie algebra $(A,\brk)$ with respect to a linear map $\theta$ if its multiplication is given by $\brk'=\theta\circ\brk$ (resp. $\brk'=\brk\circ\theta^{\otimes^2}$).
When $\theta$ is multiplicative the both notions coincide with the classical Yau's twist described above, see also \cite[Theorem 2.4]{Yau09}.
\end{rem}

\section{Classification of 2-dimensional simple Hom-Lie algebras} 

Here, we determine all $2$-dimensional simple Hom-Lie algebras up to isomorphism, but first, we describe the phenomena that over some nonalgebraically closed fields there exist abelian algebras of dimension greater than one such that, with respect to some twisting maps, their only Hom-ideals are the trivial ones.

\begin{prop}\label{p0}
There are infinitely many fields over which the abelian two-dimensional algebra $\mathfrak{a}_2$ has only trivial Hom-ideals with respect to some twisting map. 
\end{prop}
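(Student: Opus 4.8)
The plan is to observe that for the abelian algebra $\mathfrak{a}_2$ the notion of Hom-ideal collapses to that of an invariant subspace of the twisting map, and then to reduce the statement to the existence of a degree-two irreducible polynomial over infinitely many fields. First I would note that, since the bracket on $\mathfrak{a}_2$ vanishes identically, the Hom-Jacobian $J_{\sigma,\brk}$ is zero for \emph{every} linear endomorphism $\sigma$; hence $\HS(\mathfrak{a}_2)=\End(\mathfrak{a}_2)$ and every linear map is an admissible twisting map. Likewise the ideal condition $[I,\mathfrak{a}_2]\subset I$ is automatically satisfied, so a Hom-ideal of $(\mathfrak{a}_2,0,\sigma)$ is nothing but a $\sigma$-invariant subspace.

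Next I would translate the condition ``only trivial Hom-ideals'' into a condition on $\sigma$ alone. In dimension two the only proper nonzero subspaces are the lines $\F v$, and such a line is $\sigma$-invariant precisely when $v$ is an eigenvector of $\sigma$. Consequently $(\mathfrak{a}_2,0,\sigma)$ admits only the trivial Hom-ideals $\{0\}$ and $\mathfrak{a}_2$ if and only if $\sigma$ possesses no eigenvector in $\mathfrak{a}_2$, i.e. its characteristic polynomial $\chi_\sigma\in\F[X]$, which has degree two, has no root in $\F$; equivalently $\chi_\sigma$ is irreducible over $\F$.

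It then remains to exhibit infinitely many fields admitting such a $\sigma$, for which I would take the companion matrix of any monic irreducible quadratic. The finite prime fields supply an infinite family: over each $\F_p$ a counting argument shows the number of monic reducible quadratics is $p+\binom{p}{2}=\tfrac{p(p+1)}{2}$, which is strictly less than the total number $p^2$ of monic quadratics, so a monic irreducible quadratic must exist (equivalently, one may take the minimal polynomial of a generator of the degree-two extension $\F_{p^2}/\F_p$). Since there are infinitely many primes $p$, this yields infinitely many fields over each of which $\mathfrak{a}_2$ carries a twisting map with only trivial Hom-ideals.

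I do not expect a genuine obstacle here; the only subtlety, and the conceptual point of the statement, is recognizing that for an abelian algebra the Hom-ideal structure degenerates entirely to linear-algebraic invariance, so that ``only trivial Hom-ideals'' is strictly weaker than simplicity (which by definition excludes abelian algebras). The remaining care is merely to ensure that the chosen fields genuinely admit irreducible quadratics --- which rules out algebraically, and even quadratically, closed fields --- whence the use of the prime fields $\F_p$.
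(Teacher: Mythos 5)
Your proposal is correct and follows essentially the same route as the paper: identify Hom-ideals of the abelian algebra with $\sigma$-invariant subspaces, reduce ``only trivial Hom-ideals'' to $\sigma$ having no eigenvector (equivalently an irreducible quadratic characteristic polynomial), and then produce such a $\sigma$ over each finite prime field via the same counting of monic reducible versus all monic quadratics. The only cosmetic difference is that the paper parametrizes $\sigma$ in a cyclic basis $\{e_1,\sigma(e_1)\}$ and solves the eigenvalue equation explicitly, whereas you invoke the companion matrix directly; both arguments are complete.
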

\begin{proof}
Every proper nontrivial ideal of the $2$-dimensional abelian algebra $\mathfrak{a}_2$ is spanned by a nonzero vector. Thus, a twisting map $\sigma\in\End_\F(\mathfrak{a}_2)$, which makes the only Hom-ideals of $\mathfrak{a}_2$ to be the trivial ones, must have no eigenvectors. Hence, $\F$ is not algebraically closed. It is obvious that $\HS(\mathfrak{a}_2)=\End(\mathfrak{a}_2)$ for dimensional reason. Let $\sigma$ be such that $B=\{e_1,\sigma(e_1)\}$ is a basis for some vector $e_1\in\mathfrak{a}_2$. Hence, the matrix of $\sigma$ with respect to this basis is of the form
$$M_B(\sigma)=
\begin{pmatrix}
0 &a_0\\
1&a_1
\end{pmatrix}
$$
Clearly $a_0\neq0$, because if not $\sigma(e_1)$ would be an eigenvector of $\sigma$. Let's suppose that there exists a nonzero vector $x=\alpha e_1+\beta \sigma(e_1)$ such that $\exists\lambda\in\F,~\sigma(x)=\lambda x$, then 
\begin{align}\label{eq1}
    \left\{
    \begin{array}{ll}
           \lambda\alpha=a_0\beta\\
           \lambda\beta=a_1\beta+\alpha
           \end{array}
    \right.
\end{align}
Thus $\alpha\beta\neq0$ and so do $\lambda\neq0$ which in addition must satisfy the following quadratic equation 
\begin{align}\label{eq2}
    \lambda^2-a_1 \lambda-a_0=0
\end{align}
For finite fields, a simple counting argument suffices. Indeed, there is $|\F|=q$ monic polynomials of degree $1$, $X-c_0,~c_0\in\F$, and similarly there is $q^2$ monic polynomials of degree $2$, $X^2+c_1 X+c_0\in\F[X]$. By the commutativity there is only $\frac{q^2+q}{2}$ distinct products of monic polynomials of degree $1$. Hence, there is $\binom{q}{2}$ irreducible monic quadratic polynomials and therefore there exist $a_1$ and $a_0$ such that $(\ref{eq2})$ has no solution.
For infinite fields, $\mathfrak{a}_2$ has a twisting map with no eigenvectors if and only if the field $\F$ admits at least an irreducible quadratic polynomial. In both cases, such fields are infinitely many and in characteristic zero they are uncountably many. 
\end{proof}

\begin{cor}
Over algebraically closed fields, any abelian algebra of dimension greater than one, has proper nontrivial Hom-ideals with respect to any linear map.
\end{cor}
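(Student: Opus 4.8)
The plan is to reduce the statement to the existence of an eigenvector and then invoke algebraic closure. First I would observe that if $A$ is abelian then its bracket vanishes identically, so that $[I,A]=0\subset I$ holds for \emph{every} subspace $I$; thus in the abelian case every subspace is an ideal in the usual sense. Moreover, since $\brk=0$, the Hom-Jacobian $J_{\sigma,\brk}$ vanishes for every linear map, so $\HS(A)=\End(A)$ and "with respect to any linear map" is indeed vacuous as a constraint on $\sigma$. Putting these two facts together, a Hom-ideal of $(A,\brk,\sigma)$ is nothing but a $\sigma$-invariant subspace, and the corollary becomes the purely linear-algebraic assertion that every $\sigma\in\End(A)$ admits a proper nontrivial invariant subspace whenever $\dim A>1$.

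With this reduction in hand, the argument is immediate over an algebraically closed field. Fix an arbitrary $\sigma\in\End(A)$; its characteristic polynomial has degree $\dim A\geq 2$ and, by algebraic closure, splits into linear factors, so $\sigma$ has at least one eigenvalue $\lambda\in\F$ and hence a nonzero eigenvector $v$. The line $\Span\{v\}$ is then a one-dimensional $\sigma$-invariant subspace, which by the reduction above is a Hom-ideal; since $1\leq\dim\Span\{v\}<\dim A$, it is proper and nontrivial. This completes the proof.

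There is no real obstacle here, and indeed the corollary is essentially the contrapositive of an observation already made inside the proof of Proposition~\ref{p0}: there it was noted that a twisting map forcing all Hom-ideals of $\mathfrak{a}_2$ to be trivial must have no eigenvector, whence $\F$ cannot be algebraically closed. The only point requiring a moment's care is the initial reduction — verifying that in the abelian setting the bracket condition on ideals is automatic and that $\HS(A)=\End(A)$ — after which the existence of eigenvalues over $\F=\overline{\F}$ finishes the matter with no computation.
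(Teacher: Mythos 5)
Your proof is correct and follows essentially the same route the paper intends: the corollary is the contrapositive of the observation embedded in the proof of Proposition \ref{p0} (a twisting map with only trivial Hom-ideals on an abelian algebra must lack eigenvectors), and you correctly supply the routine reduction that in the abelian case every subspace is an ideal and every linear map is a twisting map. Nothing further is needed.
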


\begin{rem}
In the definition of simplicity in the category of Lie algebras, or generally in anticommutative algebras, the assumption to be not abelian, has for unique purpose to avoid that the one-dimensional algebra to be considered as simple. But, for the case of simple Hom-Lie algebras, the situation is slightly different as this assumption excludes, effectively, all the abelian algebras, cf. the above Proposition \ref{p0}.
\end{rem}

\begin{thm}\label{t0}
In dimension $2$, the only nonabelian anticommutative algebra, up to isomorphism, is the affine Lie algebra $\mathfrak{aff}(\F)$, which can be endowed with the structure of simple Hom-Lie algebra whenever the twisting map does not leave invariant its unique one-dimensional 
ideal.
\end{thm}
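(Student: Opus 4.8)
The plan is to split the statement into two independent tasks: the classification of the two-dimensional nonabelian anticommutative algebras up to isomorphism, and the determination of which twisting maps make such an algebra simple. For the classification, I would first observe that in dimension $2$ every anticommutative algebra is automatically a Lie algebra, since the Jacobian is an alternating trilinear map and $A^{\wedge^3}=0$ forces it to vanish identically. Fixing a basis $\{e_1,e_2\}$, the multiplication is entirely encoded by $v:=[e_1,e_2]$, and nonabelianness means $v\neq 0$, so the derived algebra $[A,A]=\F v$ is one-dimensional. Choosing $h$ with $\{v,h\}$ a basis, one has $[h,v]=\mu v$ for some $\mu\in\F$; expanding $[e_1,e_2]=v$ in the basis $\{v,h\}$ forces $\mu\neq 0$. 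Rescaling $f_1:=\mu^{-1}h$ and $f_2:=v$ then yields the relation $[f_1,f_2]=f_2$, which is precisely the defining relation of $\mathfrak{aff}(\F)$, establishing uniqueness up to isomorphism.

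Next I would pin down the ideals. The derived algebra $\F f_2$ is visibly an ideal, and any proper nontrivial ideal is necessarily one-dimensional. Testing an arbitrary line $\F(a f_1+b f_2)$ for invariance under $\ad f_1$ and $\ad f_2$ (using $[f_1,f_2]=f_2$) shows it is an ideal only when $a=0$; hence $\F f_2$ is the \emph{unique} proper nontrivial ideal of $\mathfrak{aff}(\F)$.

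For the Hom-Lie structure I would again invoke $A^{\wedge^3}=0$: the Hom-Jacobi identity holds vacuously, so $\HS(\mathfrak{aff}(\F))=\End(\mathfrak{aff}(\F))$ and every endomorphism $\sigma$ is an admissible twisting map. Since a Hom-ideal is an ordinary ideal that is in addition $\sigma$-stable, and the only proper nontrivial ideal is $\F f_2$, the Hom-Lie algebra $(\mathfrak{aff}(\F),\brk,\sigma)$ fails to be simple exactly when $\sigma(\F f_2)\subseteq\F f_2$. As $\mathfrak{aff}(\F)$ is nonabelian, this yields the stated criterion (in fact an equivalence); existence of an admissible $\sigma$ over \emph{any} field is witnessed, for instance, by the swap $e_1\leftrightarrow e_2$, which sends $\F f_2$ off itself.

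I expect no serious obstacle, as the result is elementary once one isolates its crux: the vacuity of the Hom-Jacobi identity in dimension $2$ makes every endomorphism an available twisting map, which is exactly the point overlooked in the literature asserting nonexistence. The only places demanding a little care are the normal-form reduction to $[f_1,f_2]=f_2$ and the verification that $\F f_2$ is the sole one-dimensional ideal, so that the simplicity of $(\mathfrak{aff}(\F),\brk,\sigma)$ is governed entirely by whether $\sigma$ leaves this line invariant.
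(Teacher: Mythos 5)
Your proposal is correct and follows essentially the same route as the paper: reduce to the normal form $[x,y]=y$ of $\mathfrak{aff}(\F)$, observe that $\F y$ is the unique proper nontrivial ideal, and use the fact that the Hom-Jacobian is an alternating trilinear map (hence vanishes on $A^{\wedge^3}=0$ in dimension $2$) to conclude $\HS(\mathfrak{aff}(\F))=\End(\mathfrak{aff}(\F))$, so that simplicity is equivalent to $\sigma(y)\notin\F y$. You merely supply the routine details the paper leaves to the reader (the normal-form reduction and the uniqueness of the ideal), plus an explicit witness for existence; the only nitpick is the stray reference to swapping $e_1\leftrightarrow e_2$ where you mean the final basis $f_1\leftrightarrow f_2$.
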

\begin{proof}
It's an easy task to show that every $2$-dimensional nonabelian anticommutative algebra is isomorphic to the affine Lie algebra $\mathfrak{aff}(\F):~[x,y]=y $. Then since $y$ spans the only nontrivial proper ideal, having the structure of a simple Hom-Lie algebra on $\mathfrak{aff}(\F)$ is reduced to take a twisting map $\sigma$ such that $\sigma(y)\notin\F y$, which is possible because the Hom-Jacobian is an alternating $3$-linear map and hence $\HS(\mathfrak{aff}(\F))=\mathrm{End}(\mathfrak{aff}(\F))$.
\end{proof} 

\begin{cor}
There is no multiplicative simple Hom-Lie algebra of dimension $2$.
\end{cor}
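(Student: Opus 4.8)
The plan is to combine Theorem \ref{t0} with the extra rigidity imposed by multiplicativity. Since a simple Hom-Lie algebra is by definition nonabelian, Theorem \ref{t0} tells us that the only possible carrier of a two-dimensional simple Hom-Lie structure is the affine Lie algebra $\mathfrak{aff}(\F)$ with bracket $[x,y]=y$, and that a twisting map $\sigma$ yields a simple structure precisely when $\sigma(y)\notin\F y$, i.e. when $\sigma$ does not leave invariant the unique one-dimensional ideal $\F y$. So it suffices to show that multiplicativity is incompatible with this condition.

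First I would identify the derived subalgebra. A direct computation gives $[\alpha x+\beta y,\gamma x+\delta y]=(\alpha\delta-\beta\gamma)\,y$, so $[\mathfrak{aff}(\F),\mathfrak{aff}(\F)]=\F y$ coincides with the unique proper nontrivial ideal. The point is that for a multiplicative twisting map the identity $\sigma\circ\brk=\brk\circ\sigma^{\otimes^2}$ forces $\sigma$ to preserve this derived ideal: every value of the right-hand side $[\sigma(u),\sigma(v)]$ lies in $\F y$, hence so does every $\sigma([u,v])$. Applying this to $[x,y]=y$ yields $\sigma(y)=[\sigma(x),\sigma(y)]\in\F y$.

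This is exactly the negation of the simplicity condition from Theorem \ref{t0}: multiplicativity forces $\sigma(y)\in\F y$, so $\F y$ is left invariant by $\sigma$ and is therefore a proper nontrivial Hom-ideal. Consequently no multiplicative twisting map can make $\mathfrak{aff}(\F)$ simple, and since this is the only nonabelian two-dimensional anticommutative algebra, there is no multiplicative simple Hom-Lie algebra of dimension $2$. There is no real obstacle here beyond the observation that multiplicativity pins $\sigma$ to the derived subalgebra, which happens to be the unique proper ideal; once that is noticed the argument is a one-line computation.
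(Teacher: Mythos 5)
Your proof is correct and follows essentially the same route as the paper: both arguments use multiplicativity to show that the unique one-dimensional ideal $\F y$ of $\mathfrak{aff}(\F)$ (which coincides with the derived subalgebra, so that $\sigma([u,v])=[\sigma(u),\sigma(v)]\in\F y$) is necessarily $\sigma$-stable, hence a proper nontrivial Hom-ideal. No issues.
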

\begin{proof}
Let $I$ be the unique $1$-dimensional ideal of $\mathfrak{aff}(\F)$, then $\sigma(I)=\sigma([I,\mathfrak{aff}(\F)])= [\sigma(I),\sigma(\mathfrak{aff}(\F))]\subset I$. Hence $I$ is a proper nontrivial Hom-ideal of $\mathfrak{aff}(\F)$. 
\end{proof}

\begin{thm}
There is at least as much nonisomorphic simple Hom-Lie structures on $\mathfrak{aff}(\F)$ as elements in $\F$.
\end{thm}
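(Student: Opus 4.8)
The plan is to reduce the counting of isomorphism classes to a counting of orbits under a conjugation action, and then to separate $|\F|$ of these orbits by a single linear-algebra invariant. By Theorem \ref{t0}, up to isomorphism the bracket is $\brk:[x,y]=y$, the line $\F y$ is the unique nontrivial proper ideal, and (since $\HS(\mathfrak{aff}(\F))=\End(\mathfrak{aff}(\F))$) the simple Hom-Lie structures on $\mathfrak{aff}(\F)$ are exactly the maps $\sigma\in\End(\mathfrak{aff}(\F))$ with $\sigma(y)\notin\F y$. I would first record that an isomorphism between two such structures $(\mathfrak{aff}(\F),\brk,\sigma_1)$ and $(\mathfrak{aff}(\F),\brk,\sigma_2)$ is, by the definition of a Hom-Lie homomorphism, a bijective automorphism $\varphi$ of the anticommutative algebra $\mathfrak{aff}(\F)$ satisfying the compatibility condition $\varphi\circ\sigma_1=\sigma_2\circ\varphi$, that is, $\sigma_2=\varphi\,\sigma_1\,\varphi^{-1}$. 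Hence the isomorphism classes in question are precisely the orbits of the conjugation action of $\Aut(\mathfrak{aff}(\F))$ on the set $S=\{\sigma\in\End(\mathfrak{aff}(\F)):\sigma(y)\notin\F y\}$.

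Next I would exploit that this action is conjugation inside $\mathrm{GL}(\mathfrak{aff}(\F))$. Because $\Aut(\mathfrak{aff}(\F))\subseteq\mathrm{GL}_2(\F)$ and the trace of an endomorphism is invariant under conjugation by any invertible linear map, the assignment $\sigma\mapsto\mathrm{tr}(\sigma)$ is constant on each orbit and therefore descends to a well-defined map from the set of isomorphism classes of simple Hom-Lie structures on $\mathfrak{aff}(\F)$ to $\F$. Consequently any two such structures with distinct traces are automatically nonisomorphic, and it suffices to realize every trace value inside the simple locus $S$.

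To close the argument I would exhibit one explicit representative per trace value. For each $t\in\F$ set $\sigma_t(x)=0$ and $\sigma_t(y)=x+ty$, whose matrix in the basis $\{x,y\}$ is $\left(\begin{smallmatrix}0&1\\0&t\end{smallmatrix}\right)$. Since $\sigma_t(y)=x+ty\notin\F y$, the triple $(\mathfrak{aff}(\F),\brk,\sigma_t)$ is a simple Hom-Lie algebra by Theorem \ref{t0}, and $\mathrm{tr}(\sigma_t)=t$. As $t$ ranges over $\F$ these traces are pairwise distinct, so $t\mapsto[(\mathfrak{aff}(\F),\brk,\sigma_t)]$ is an injection of $\F$ into the set of isomorphism classes, yielding at least $|\F|$ of them.

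The only genuinely delicate point is the first step: pinning down that isomorphism of Hom-Lie structures sharing the same underlying bracket is exactly conjugation by an element of $\Aut(\mathfrak{aff}(\F))$, together with the easy but essential observation that this automorphism group lies inside $\mathrm{GL}_2(\F)$, so that the trace is a legitimate orbit invariant. Computing $\Aut(\mathfrak{aff}(\F))=\{\varphi:\varphi(x)=x+by,\ \varphi(y)=dy,\ b\in\F,\ d\in\F^{\times}\}$ explicitly would make this transparent, though it is not strictly required since every conjugation in $\mathrm{GL}_2$ preserves the trace.
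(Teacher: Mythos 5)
Your proof is correct and rests on the same key point as the paper's: the trace is invariant under conjugation by elements of $\Aut(\mathfrak{aff}(\F))\subseteq\mathrm{GL}_2(\F)$, so it separates isomorphism classes, and every value of $\F$ is realized on the simple locus. The paper reaches the same conclusion after explicitly computing $\Aut(\mathfrak{aff}(\F))$ and the full isomorphism criterion, whereas you shortcut that computation by citing general conjugation-invariance of the trace and exhibiting the representatives $\sigma_t$; both are valid.
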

\begin{proof}
Set $M_{\sigma}=\begin{pmatrix}\sigma_{11} & \sigma_{12}\\ \sigma_{21} & \sigma_{22}\end{pmatrix}$ and $M_{\eta}=\begin{pmatrix}\eta_{11} & \eta_{12}\\ \eta_{21} & \eta_{22}\end{pmatrix}$ be respectively the matrices of two twisting maps $\sigma$ and $\eta$  of $\mathfrak{aff}(\F):~[x,y]=y$, with respect to the basis $B=\left\{x,y\right\}$. They induce simple Hom-Lie structures if and only if $\sigma_{12} \eta_{12}\neq  0$.\\
Let $\varphi\in\operatorname{Aut}(\mathfrak{aff}(\F))$ and $M_{\varphi}=\begin{pmatrix}a& b\\c & d\end{pmatrix}$ be its matrix of $\varphi$ with respect to $B$. So, 
\begin{align*}
    \varphi\in\operatorname{Aut}(\mathfrak{aff}(\F))&\iff\left[\varphi(x),\varphi(y)\right]=\varphi(y)~\text{and}~\det(\varphi)\neq 0\\
    &\iff  M_{\varphi}=\begin{pmatrix}a& b\\c & d\end{pmatrix},~a d= d,~b=0~\text{and}~ a d\neq 0\\
    &\iff  M_{\varphi}=\begin{pmatrix}1& 0\\c & d\end{pmatrix},~c,d\in\F~\text{and}~ d\neq 0
\end{align*}
Then $\varphi$ is an isomorphism between $(\mathfrak{aff}(\F),\brk_\sigma,\sigma)$ and $(\mathfrak{aff}(\F),\brk_\eta,\eta)$, if in addition $            \varphi\circ\sigma=\eta\circ\varphi $. 
Hence, 

\begin{align*}
            \left\{
            \begin{array}{ll}
           \varphi\in\operatorname{Aut}(\mathfrak{aff}(\F))\\
            \varphi\circ\sigma=\eta\circ\varphi 
                \end{array}
              \right.&\iff  \begin{pmatrix}1& 0\\c & d\end{pmatrix}\begin{pmatrix}\sigma_{11} & \sigma_{12}\\ \sigma_{21} & \sigma_{22}\end{pmatrix}=\begin{pmatrix}\eta_{11} & \eta_{12}\\ \eta_{21} & \eta_{22}\end{pmatrix}\begin{pmatrix}1 & 0\\ c & d\end{pmatrix},~\sigma_{12} \eta_{12} d\neq 0\\[2mm]
              &\iff \left\{
            \begin{array}{ll}
           d=\dfrac{\sigma_{12}}{\eta_{12}},~~\sigma_{12} \eta_{12} \neq 0\\[4mm]
           c=\dfrac{\sigma_{11}-\eta_{11}}{\eta_{12}}\\[4mm]
            \eta_{11}+\eta_{22}=\sigma_{11}+\sigma_{22}\\[2mm]
            \sigma_{12} \sigma_{21}-\eta_{12} \eta_{21}=(\sigma_{11}-\eta_{11})(\sigma_{22}-\eta_{11})
            \end{array}
              \right.
\end{align*}
Hence the two Hom-structures are isomorphic if and only if 
\begin{align*}
\left\{
\begin{array}{ll}
               \operatorname{tr}(\sigma)=\operatorname{tr}(\eta)\\[3mm]
               \sigma_{12} \sigma_{21}-\eta_{12} \eta_{21}=(\sigma_{11}-\eta_{11})(\sigma_{22}-\eta_{11})
     \end{array}
     \right.
\end{align*}
Thus there is at least as much nonisomorphic simple Hom-Lie structures on $\mathfrak{aff}(\F)$ as elements in $\F$, since the trace is invariant by matrix similarity.
\end{proof}


\section{Classification of Multiplicative Simple Hom-Lie algebras}
In \cite{CH16}, \emph{Classification of multiplicative simple Hom-Lie algebras}, the authors only classified regular simple Hom-Lie algebras over the field of complex numbers $\C$. Here, we extend the classification to any arbitrary field $\F$ and also we give a characterization of the multiplicative singular case.

\begin{lem}\label{l41}
Let $(A,[\cdot,\cdot],\sigma)$ be a multiplicative simple Hom-Lie algebra, then 
$\sigma\in\Aut(A)$ or $\sigma=0$.
\end{lem}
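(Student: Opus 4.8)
The plan is to analyze the kernel of $\sigma$ and exploit multiplicativity to exhibit it as a Hom-ideal, after which simplicity immediately produces the dichotomy. Since $(A,\brk,\sigma)$ is multiplicative, the twisting map is an endomorphism of the underlying anticommutative algebra, so $\sigma([x,y]) = [\sigma(x),\sigma(y)]$ for all $x,y\in A$. This is the sole structural fact I would need.

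First I would verify that $\Ker(\sigma)$ is a Hom-ideal. For $x\in\Ker(\sigma)$ and any $a\in A$, multiplicativity gives $\sigma([x,a]) = [\sigma(x),\sigma(a)] = [0,\sigma(a)] = 0$, so $[x,a]\in\Ker(\sigma)$; by anticommutativity this yields $[\Ker(\sigma),A]\subseteq\Ker(\sigma)$. Moreover $\sigma(\Ker(\sigma)) = 0\subseteq\Ker(\sigma)$, so $\Ker(\sigma)$ is invariant under the twisting map. Hence $\Ker(\sigma)$ is a Hom-ideal of $A$.

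By the simplicity of $A$, the Hom-ideal $\Ker(\sigma)$ must be either $0$ or the whole of $A$. If $\Ker(\sigma) = A$, then $\sigma = 0$. If $\Ker(\sigma) = 0$, then $\sigma$ is injective, and since $A$ is finite-dimensional it is therefore bijective; being a bijective endomorphism of the anticommutative algebra $A$, it is an automorphism, i.e. $\sigma\in\Aut(A)$. This completes the two cases.

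I do not expect any serious obstacle. The only points deserving a moment's care are the passage from injectivity to bijectivity, which rests on the standing finite-dimensionality hypothesis, and the elementary remark that a bijective multiplicative linear map is exactly an automorphism of anticommutative algebras; both are routine once multiplicativity has been used to make $\Ker(\sigma)$ into a Hom-ideal, which is the crux of the argument.
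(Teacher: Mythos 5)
Your proof is correct and follows exactly the paper's argument: multiplicativity makes $\Ker(\sigma)$ a Hom-ideal, simplicity forces it to be $0$ or $A$, and in the injective case finite-dimensionality upgrades $\sigma$ to an automorphism. The paper states this in two lines; you have merely filled in the same routine details.
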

\begin{proof}
By multiplicativity $\Ker(\sigma)$ is a Hom-ideal. Hence, by simplicity, it is either zero or equal to $A$.
\end{proof}

\begin{cor}\label{c-1}
A multiplicative simple Hom-Lie algebra is either a regular simple Hom-Lie algebra or a simple anticommutative algebra when the twisting map is the zero map.
\end{cor}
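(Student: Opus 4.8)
The plan is to read off the corollary directly from Lemma \ref{l41}, which already establishes the dichotomy $\sigma \in \Aut(A)$ or $\sigma = 0$ for a multiplicative simple Hom-Lie algebra $(A, \brk, \sigma)$. It then remains only to interpret each of the two alternatives through the relevant definitions, so the entire argument is a matter of unwinding notation rather than producing new content.

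First I would dispose of the case $\sigma \in \Aut(A)$. This requires nothing beyond the definition of regularity: since $\sigma$ is by hypothesis an automorphism of the underlying anticommutative algebra, the triple $(A, \brk, \sigma)$ is a regular Hom-Lie algebra, and being simple by assumption, it is a regular simple Hom-Lie algebra.

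The case $\sigma = 0$ carries whatever content the statement has, though it is equally brief. The crucial observation is that the invariance clause $\sigma(I) \subset I$ in the definition of a Hom-ideal becomes vacuous when $\sigma = 0$, because $\sigma(I) = \{0\} \subset I$ for every subspace $I \subset A$. Hence the Hom-ideals of $(A, \brk, 0)$ are exactly the ordinary ideals of the anticommutative algebra $(A, \brk)$. Since the non-abelian requirement is common to both notions of simplicity, the simplicity of the Hom-Lie algebra $(A, \brk, 0)$ is then precisely the simplicity of $(A, \brk)$ as an anticommutative algebra, which completes the identification.

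I expect no genuine obstacle, as the result is an immediate consequence of Lemma \ref{l41} combined with a reading of the definition of Hom-ideal. The only subtlety worth flagging is that when $\sigma = 0$ the Hom-Jacobi identity holds trivially, since each summand $[\sigma(x),[y,z]] = [0,[y,z]]$ vanishes, so that no condition on $\brk$ beyond anticommutativity survives; this is exactly what makes the bracket of a multiplicative simple Hom-Lie algebra in the degenerate case indistinguishable from that of a simple anticommutative algebra.
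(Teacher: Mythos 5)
Your proposal is correct and follows exactly the route the paper intends: the corollary is stated as an immediate consequence of Lemma \ref{l41}, and your unwinding of the two cases (regularity by definition when $\sigma\in\Aut(A)$, and the vacuity of the $\sigma$-invariance clause when $\sigma=0$, so that Hom-ideals coincide with ordinary ideals) is precisely the intended reading. No gap.
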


\begin{lem}\label{l2}
Let $(A,[\cdot,\cdot],\sigma)$ be a regular Hom-Lie algebra, 
then $(A,[\cdot,\cdot]_{\sigma^{-1}})$ is its induced Lie algebra and $\sigma$ is an automorphism of Lie algebras.
\end{lem}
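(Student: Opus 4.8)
The plan is to untwist the Hom-Lie bracket by $\sigma^{-1}$ and then verify directly that what results is a Lie algebra on which $\sigma$ acts as an automorphism. The first thing I would record is that, since the Hom-Lie algebra is regular, $\sigma\in\Aut(A)$ as an anticommutative algebra, and hence so is its inverse $\sigma^{-1}$; in particular $\sigma^{-1}([x,y])=[\sigma^{-1}(x),\sigma^{-1}(y)]$ for all $x,y$. This shows that the outside and inside Yau twists by $\sigma^{-1}$ coincide, so the induced bracket is unambiguous, namely
\[
[x,y]_{\sigma^{-1}}=\sigma^{-1}([x,y])=[\sigma^{-1}(x),\sigma^{-1}(y)].
\]
Its anticommutativity is then inherited from $\brk$, because $\sigma^{-1}$ is linear and $[x,x]=0$.

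The heart of the argument, and the step I expect to require the most care, is checking the Jacobi identity for $[\cdot,\cdot]_{\sigma^{-1}}$. Expanding $[x,[y,z]_{\sigma^{-1}}]_{\sigma^{-1}}$ and repeatedly pulling $\sigma^{-1}$ across brackets via the automorphism property, I would reduce the inner term to
\[
[x,[y,z]_{\sigma^{-1}}]_{\sigma^{-1}}=[\sigma^{-1}(x),[\sigma^{-2}(y),\sigma^{-2}(z)]].
\]
Substituting $u=\sigma^{-2}(x)$, $v=\sigma^{-2}(y)$, $w=\sigma^{-2}(z)$ turns $\sigma^{-1}(x)$ into $\sigma(u)$, so the expression becomes $[\sigma(u),[v,w]]$. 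Since $\sigma^{-2}$ is a bijection, the cyclic sum over $x,y,z$ equals the cyclic sum over $u,v,w$, which is precisely the Hom-Jacobian $J_{\sigma,\brk}(u,v,w)$ and therefore vanishes by the Hom-Jacobi identity. The only genuine obstacle here is the bookkeeping of the twisting powers: one must ensure the substitution aligns the three cyclic terms correctly so that they reassemble into a single evaluation of $J_{\sigma,\brk}$.

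Finally, I would verify that $\sigma$ is a Lie algebra automorphism of $(A,[\cdot,\cdot]_{\sigma^{-1}})$. This is immediate from the automorphism property: on the one hand $\sigma([x,y]_{\sigma^{-1}})=\sigma([\sigma^{-1}(x),\sigma^{-1}(y)])=[x,y]$, and on the other $[\sigma(x),\sigma(y)]_{\sigma^{-1}}=[\sigma^{-1}\sigma(x),\sigma^{-1}\sigma(y)]=[x,y]$, so the two agree; bijectivity of $\sigma$ comes for free from regularity. This simultaneously justifies calling $(A,[\cdot,\cdot]_{\sigma^{-1}})$ the \emph{induced Lie algebra}, since it exhibits the original Hom-Lie bracket as the outside Yau twist $\sigma\circ[\cdot,\cdot]_{\sigma^{-1}}$ of a genuine Lie algebra by the automorphism $\sigma$.
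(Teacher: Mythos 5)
Your proposal is correct and follows essentially the same route as the paper: the paper rewrites $[\sigma(x),[y,z]]$ as $\sigma^2\bigl([x,[y,z]_{\sigma^{-1}}]_{\sigma^{-1}}\bigr)$ and invokes injectivity of $\sigma^2$, while you perform the inverse substitution $u=\sigma^{-2}(x)$, etc., to identify the untwisted Jacobiator with $J_{\sigma,\brk}$ at shifted arguments — the same computation read in the opposite direction. Your explicit verification that $\sigma$ is a Lie automorphism fills in what the paper dismisses as automatic.
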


\begin{proof}We have 
$\cycl_{x,y,z}[\sigma(x),[y,z]]=\cycl_{x,y,z}\sigma^2\big(\sigma^{-1}\circ[x,\sigma^{-1}\circ[y,z]]\big)=0$ implies $\cycl_{x,y,z}[x,[y,z]_{\sigma^{-1}}]_{\sigma^{-1}}=0$. In addition, $\sigma$ is automatically an automorphism of Lie algebras.
\end{proof}

\begin{rem}
For regular Hom-Lie algebras, $I$ is a Hom-ideal if and only if it is an ideal of its induced Lie algebra that is stable by its twisting map.
\end{rem}

\begin{lem}\label{l3}
$\varphi :  (A,[\cdot,\cdot]^{A},\sigma_{A})\longrightarrow (B,[\cdot,\cdot]^{B},\sigma_{B})$ is a homomorphism of regular Hom-Lie algebras, if and only if $\varphi : (A,[\cdot,\cdot]^{A}_{\sigma_{A}^{-1}})\longrightarrow(B,[\cdot,\cdot]^{B}_{\sigma_{B}^{-1}})$ is a homomorphism of Lie algebras and $\varphi\circ\sigma_{A}=\sigma_{B}\circ\varphi$.
\end{lem}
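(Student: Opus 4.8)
The plan is to exploit that the compatibility condition $\varphi\circ\sigma_A=\sigma_B\circ\varphi$ occurs verbatim on both sides of the asserted equivalence, so that the whole statement collapses to showing that, once this compatibility is granted, preservation of the original Hom-Lie brackets is equivalent to preservation of the induced Lie brackets $\brk^A_{\sigma_A^{-1}}=\sigma_A^{-1}\circ\brk^A$ and $\brk^B_{\sigma_B^{-1}}=\sigma_B^{-1}\circ\brk^B$ supplied by Lemma~\ref{l2}. The one preliminary observation I would record is that regularity makes $\sigma_A,\sigma_B$ invertible, so the compatibility may equally be written as $\varphi\circ\sigma_A^{-1}=\sigma_B^{-1}\circ\varphi$; having it in both forms is exactly what lets $\varphi$ be pushed past the twisting maps in either direction.

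For the forward implication I would assume $\varphi$ is a homomorphism of regular Hom-Lie algebras, so $\varphi\circ\brk^A=\brk^B\circ\varphi^{\otimes^2}$ together with the compatibility. Composing on the left with $\sigma_B^{-1}$ and substituting $\sigma_B^{-1}\circ\varphi=\varphi\circ\sigma_A^{-1}$ gives
\begin{align*}
\varphi\circ\brk^A_{\sigma_A^{-1}}
=\varphi\circ\sigma_A^{-1}\circ\brk^A
=\sigma_B^{-1}\circ\varphi\circ\brk^A
=\sigma_B^{-1}\circ\brk^B\circ\varphi^{\otimes^2}
=\brk^B_{\sigma_B^{-1}}\circ\varphi^{\otimes^2},
\end{align*}
which is precisely the statement that $\varphi$ is a homomorphism of the induced Lie algebras; the compatibility is retained unchanged.

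For the converse I would start from a Lie-algebra homomorphism $\varphi\circ\brk^A_{\sigma_A^{-1}}=\brk^B_{\sigma_B^{-1}}\circ\varphi^{\otimes^2}$ satisfying $\varphi\circ\sigma_A=\sigma_B\circ\varphi$, and recover the original bracket via $\brk^A=\sigma_A\circ\brk^A_{\sigma_A^{-1}}$. Composing on the left with $\sigma_B$ and using compatibility in the form $\sigma_B\circ\varphi=\varphi\circ\sigma_A$ yields
\begin{align*}
\varphi\circ\brk^A
=\varphi\circ\sigma_A\circ\brk^A_{\sigma_A^{-1}}
=\sigma_B\circ\varphi\circ\brk^A_{\sigma_A^{-1}}
=\sigma_B\circ\brk^B_{\sigma_B^{-1}}\circ\varphi^{\otimes^2}
=\brk^B\circ\varphi^{\otimes^2},
\end{align*}
so $\varphi$ is a homomorphism of the underlying anticommutative algebras, and with the assumed compatibility it is a homomorphism of regular Hom-Lie algebras.

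There is no genuine obstacle here: the argument is a pair of symmetric one-line computations, and the only point demanding care is bookkeeping the direction in which $\varphi$ is commuted past $\sigma_A^{\pm1}$ and $\sigma_B^{\pm1}$. The only structural fact being used beyond Lemma~\ref{l2} is the invertibility of the twisting maps, which is guaranteed by regularity; if that hypothesis were dropped the rewriting of the compatibility condition, and hence the equivalence, would fail.
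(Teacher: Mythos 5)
Your proof is correct and takes essentially the same route as the paper's: both convert the bracket-preservation identity for $[\cdot,\cdot]^{A},[\cdot,\cdot]^{B}$ into the one for the induced Lie brackets by commuting $\varphi$ past the invertible twisting maps via the compatibility condition, which is carried along unchanged. Your version is marginally leaner --- you only left-compose with $\sigma_{B}^{\pm1}$ and use bijectivity together with $\varphi\circ\sigma_{A}^{-1}=\sigma_{B}^{-1}\circ\varphi$, whereas the paper first rewrites each bracket as $\sigma^{-1}\circ[\cdot,\cdot]\circ\sigma\otimes\sigma$ (using that the twisting maps are algebra automorphisms) and then cancels $\sigma_{A}\otimes\sigma_{A}$ on the right --- but the underlying computation is the same.
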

\begin{proof}
\begin{align*}\vspace{-2cm}
    \varphi:(A,[\cdot,\cdot]^{A},\sigma_{A})\longrightarrow (B,[\cdot,\cdot]^{B},\sigma_{B})& \iff\left\{
    \begin{array}{ll}
           \varphi\circ[\cdot,\cdot]^{A}=[\cdot,\cdot]^{B}\circ\varphi\otimes\varphi\\
           \varphi\circ\sigma_{A}=\sigma_{B}\circ\varphi
    \end{array}
    \right.\\
     &\hspace{-5cm} \iff\left\{
    \begin{array}{ll}
         \varphi\circ\sigma_{A}^{-1}\circ[\cdot,\cdot]^{A}\circ\sigma_{A}\otimes\sigma_{A}=\sigma_{B}^{-1}\circ[\cdot,\cdot]^{B}\circ\sigma_{B}\otimes\sigma_{B}\circ\varphi\otimes\varphi\\
    \varphi\circ\sigma_{A}=\sigma_{B}\circ\varphi
    \end{array}
    \right.\\
    &\hspace{-5cm} \iff\left\{
    \begin{array}{ll}
         \varphi\circ[\cdot,\cdot]^{A}_{\sigma_{A}^{-1}}\circ\sigma_{A}\otimes\sigma_{A}=[\cdot,\cdot]^{B}_{\sigma_{B}^{-1}}\circ\varphi\otimes\varphi\circ\sigma_{A}\otimes\sigma_{A}\\
    \varphi\circ\sigma_{A}=\sigma_{B}\circ\varphi
    \end{array}
    \right.\\
    &\hspace{-5cm} \iff\left\{
    \begin{array}{ll}
          \varphi\circ[\cdot,\cdot]^{A}_{\sigma_{A}^{-1}}=[\cdot,\cdot]^{B}_{\sigma_{B}^{-1}}\circ\varphi\otimes\varphi\\
          \varphi\circ\sigma_{A}=\sigma_{B}\circ\varphi
    \end{array}
    \right.\\
     &\hspace{-5cm}\iff\left\{
    \begin{array}{ll}
              \varphi:(A,[\cdot,\cdot]^{A}_{\sigma_{A}^{-1}})\longrightarrow (B,[\cdot,\cdot]^{B}_{\sigma_{B}^{-1}})\\
       \varphi\circ\sigma_{A}=\sigma_{B}\circ\varphi
    \end{array}
    \right.
\end{align*}
\end{proof}
\begin{thm}\label{t2}
A regular simple Hom-Lie algebra $(A,[\cdot,\cdot],\sigma)$ is a direct sum of $n$-copies of a simple Lie algebra $\s$ such that $A=\bigoplus_{i=1}^n\s_i,$  $(\s_i,[\cdot,\cdot]_i)\simeq(\s,[\cdot,\cdot]_\s)$, $[\cdot,\cdot]=\sigma\circ\oplus_{i=1}^n[\cdot,\cdot]_i$ and the twisting map is of the form $\sigma=\oplus_{i=1}^n\sigma_{i\rho(i)}$ where $\sigma_{i\rho(i)}:\s_i\longrightarrow\s_{\rho(i)}$ are automorphisms of the simple Lie algebra $\s$ and $\rho$ is a cyclic permutation of length $n$.
\end{thm}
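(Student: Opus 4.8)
The plan is to transport the whole problem to the induced Lie algebra and then run a minimal–ideal analysis. First I would set $L:=(A,\brk_{\sigma^{-1}})$, which by Lemma \ref{l2} is a genuine Lie algebra for which $\sigma$ is an automorphism, and recall from the remark following that lemma that a subspace is a Hom-ideal of $(A,\brk,\sigma)$ precisely when it is a $\sigma$-invariant ideal of $L$. Hence simplicity of the Hom-Lie algebra is exactly the statement that $L$ has no proper nonzero $\sigma$-invariant ideal; I shall call this property $\sigma$-simplicity. I would also record at once that $L$ is not abelian: since $\brk=\sigma\circ\brk_{\sigma^{-1}}$ and $\sigma$ is invertible, $\brk_{\sigma^{-1}}=0$ would force $\brk=0$, contradicting that a simple Hom-Lie algebra is non-abelian by definition.

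Next I would produce the decomposition. As $A\neq0$ is finite dimensional, $L$ has a minimal nonzero ideal $M$. Since $\sigma\in\Aut(L)$, every $\sigma^k(M)$ is again a minimal ideal, and $\sum_{k\ge0}\sigma^k(M)$ is a $\sigma$-invariant ideal, hence equals $L$ by $\sigma$-simplicity. A standard greedy argument then refines this sum to a direct one: choosing a maximal subfamily whose sum is direct and using that for two distinct minimal ideals $N_1,N_2$ one has $N_1\cap N_2=0$ by minimality and therefore $[N_1,N_2]\subseteq N_1\cap N_2=0$, I obtain $L=\bigoplus_{i=1}^n M_i$ with each $M_i$ one of the $\sigma^k(M)$ and the summands pairwise commuting. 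No $M_i$ can be abelian: the $\sigma^k(M)$ are all isomorphic to $M$ via $\sigma$, so if one were abelian all would be, and then the pairwise-commuting decomposition would make $L$ abelian, a contradiction. Thus $[M_i,M_i]=M_i$, and because the other summands centralize $M_i$, every ideal of $M_i$ is an ideal of $L$ contained in $M_i$; minimality forces $M_i$ to be a simple Lie algebra.

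Then I would pin down the permutation and the isomorphism type. In a direct sum of simple ideals the minimal ideals are exactly the summands: via the projections $\pi_i\colon L\to M_i$, which are Lie homomorphisms because the summands commute, any minimal ideal is seen to coincide with some $M_i$, so $\{M_1,\dots,M_n\}$ is the full set of minimal ideals of $L$. Consequently $\sigma$ permutes this set, defining a permutation $\rho$ with $\sigma(M_i)=M_{\rho(i)}$, and each restriction $\sigma_{i\rho(i)}:=\sigma|_{M_i}\colon M_i\isomto M_{\rho(i)}$ is an isomorphism. Transitivity of $\rho$ is forced because $L=\sum_k\sigma^k(M_1)$: were the $\langle\sigma\rangle$-orbit of $M_1$ to omit some $M_j$, the sum over the orbit would miss the $j$-th component and fail to exhaust $L$; hence the orbit is everything and $\rho$ is a single $n$-cycle. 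Chaining the $\sigma_{i\rho(i)}$ around this cycle identifies all $M_i$ with one simple Lie algebra $\s$, and finally $\brk=\sigma\circ\brk_{\sigma^{-1}}=\sigma\circ\bigl(\oplus_{i=1}^n\brk_i\bigr)$ recovers the asserted form of the bracket together with $\sigma=\oplus_{i=1}^n\sigma_{i\rho(i)}$.

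The main obstacle is the structural step in the middle, carried out over an arbitrary field: one must avoid the characteristic-$p$ pathologies of semisimplicity by working with minimal ideals directly rather than invoking a decomposition into simple ideals, and the delicate points are precisely the exclusion of abelian minimal ideals and the transitivity argument that collapses a general permutation of the simple factors to a single $n$-cycle. Everything else — the refinement to a direct sum and the identification of the summands as the minimal ideals — is routine once commutativity of distinct minimal ideals is in hand.
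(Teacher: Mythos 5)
Your proof is correct, and it takes a genuinely different route at the key structural step. The paper also passes to the induced Lie algebra $(A,\brk_{\sigma^{-1}})$ and builds $A$ as a direct sum of $\sigma$-translates of a minimal ideal, but it first proves that the radical of the induced Lie algebra vanishes (using that derived series of Hom-ideals are again Hom-ideals in the multiplicative setting) and then, to show the minimal ideal is actually \emph{simple}, it invokes heavy structure theory: Block's theorem on semisimple Lie algebras in characteristic $p$ (minimal ideals of the form $\s\otimes\mathcal{O}(m;\underline{1})$, with $m=0$ forced by the absence of nilpotent ideals) and Jacobson's structure theorem in characteristic $0$. You replace all of this with two elementary observations: distinct minimal ideals intersect trivially and hence commute, so an abelian minimal ideal would make $L=\bigoplus_i\sigma^{k_i}(M)$ abelian, contradicting non-abelianness of a simple Hom-Lie algebra transported through the invertible $\sigma$; and a non-abelian minimal direct summand whose complement centralizes it has no proper nonzero ideals of its own, hence is simple. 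Your derivation of the single $n$-cycle via the $\langle\sigma\rangle$-orbit of $M$ exhausting all summands is equivalent to the paper's argument via the minimality of $n$ and $\sigma^n(\m)=\m$. The net effect is that your argument is uniform in the characteristic and self-contained, whereas the paper's buys a connection to the classical semisimplicity framework at the cost of citing Block; both establish the same decomposition $A=\bigoplus_{i=1}^n\s_i$, $\brk=\sigma\circ\oplus_i\brk_i$, $\sigma=\oplus_i\sigma_{i\rho(i)}$ with $\rho$ an $n$-cycle.
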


\begin{proof}
Following Lemma \ref{l2}, $(A,[\cdot,\cdot]_{\sigma^{-1}})$ is the induced Lie algebra of the regular simple Hom-Lie algebra $(A,[\cdot,\cdot],\sigma)$ and let $\mathrm{Rad}(A)$ be its radical. By multiplicativity, $\sigma(\mathrm{Rad}(A))$ is also a solvable ideal, thus $\sigma(\mathrm{Rad}(A))\subset\mathrm{Rad}(A)$ and, by injectivity of $\sigma$, the equality holds $\sigma(\mathrm{Rad}(A))=\mathrm{Rad}(A)$. Hence, $\mathrm{Rad}(A)$ is an ideal of $(A,\brk)$ and $\sigma$-stable, thus a Hom-ideal. So by Hom-simplicity either $\mathrm{Rad}(A)=0$ or $\mathrm{Rad}(A)=A$. However, in the multiplicative case, the derived series of a Hom-ideal are Hom-ideals. Therefore $\mathrm{Rad}(A)$ must be zero. Hence $(A,[\cdot,\cdot]_{\sigma^{-1}})$ is a semisimple Lie algebra and $\sigma$ is an automorphism of Lie algebras. Now, let $\m$ be a nontrivial minimal ideal of $(A,[\cdot,\cdot]_{\sigma^{-1}})$, then the finite dimensionality implies that there exists a smallest integer $n\geq1$ such that $\sigma^n(\m)\cap\bigoplus_{i=0}^{n-1}\sigma^{i}(\m)$ is a nonzero ideal, which is clearly contained in the minimal ideal $\sigma^n(\m)$, thus $\sigma^n(\m)\subset\bigoplus_{i=0}^{n-1}\sigma^{i}(\m)$. So $\bigoplus_{i=0}^{n-1}\sigma^i(\m)$ is a $\sigma$-stable ideal and thus is equal to the whole $A$. Further, for any $x\in\m$ there exist $x_i\in\m,~0\leq i\leq n-1$, such that $\sigma^{n}(x)=\sum_{i=0}^{n-1}\sigma^i(x_i)$, so that  $[\sigma^{n}(\m),\sigma^{n}(x)]_{\sigma^{-1}}=[\sigma^{n}(\m),\sum_{i=0}^{n-1}\sigma^i(x_i)]_{\sigma^{-1}}=\sum_{i=0}^{n-1}[\sigma^{n}(\m),\sigma^i(x_i)]_{\sigma^{-1}}\subset\bigoplus_{i=0}^{n-1}\sigma^{n}(\m)\cap\sigma^i(\m)$. Therefore by the minimality of $\m$ and the smallness of $n$, $\sigma^n(\m)=\m$.\\
If $\ch=p>0$ then thanks to the structure theorem of semisimple Lie algebras in prime characteristic due to R. E. Block in \cite[p. 457]{Blo69}, a semisimple Lie algebra has its minimal ideals in direct sum, called the socle of the semisimple Lie algebra, and a minimal ideal is of the form $\s\otimes\mathcal{O}(m ; \underline{1})$, where $m\geq0$, $\underline{1}=(1,\cdots,1)$ a $m$-tuple, $\s$ is simple Lie algebra and $\mathcal{O}(m ; \underline{1})$ is isomorphic to the truncated polynomial algebra $\F[X_1,...,X_m]/(X^p_1,\cdots,X^p_m)$. Thus the socle of the induced Lie algebra $(A,\brk_{\sigma^{-1}})$ coincide with $A$ and is equal to $n$-copies of a minimal ideal of the form $\m=\s\otimes\mathcal{O}(m ; \underline{1})$. However, this situation forces $m$ to be zero, since $\s\otimes\mathcal{O}(m ; \underline{1})$ has a maximal nontrivial nilpotent ideal if $m\geq1$. So the minimal ideal $\m$ is a simple Lie algebra.
\\
On the other hand, if $\ch=0$ then $A$ is a direct sum of $n$-copies of a simple Lie algebra, thanks to structure theorem in \cite[p. 71]{Jac79}.\\
Finally, we can summarize the both cases as follows :  $A=\bigoplus_{i=1}^n\s_i$ where $[\cdot,\cdot]=\sigma\circ\oplus_{i=1}^n[\cdot,\cdot]_i$, $\sigma^n(\s_i)=\s_i,~1\leq i\leq n$,  $(\s_i,[\cdot,\cdot]_i)\simeq(\s,[\cdot,\cdot]_\s)$ is 
a simple Lie algebra, and so $\sigma=\oplus_{i=1}^n\sigma_{i\rho(i)}$ with $\sigma_{i\rho(i)}:=\sigma_{\mid \s_i} : \s_i\longrightarrow \s_{\rho(i)}$ are automorphisms of the Lie algebra $\s$ where $\rho$ is clearly a cyclic permutation of length $n$.
\end{proof}

\begin{rem}\label{r0}
The dimension of a regular simple Hom-Lie algebra is a multiple of the dimension of a simple Lie algebra. 
\end{rem}

\begin{thm}\label{t3}
  A multiplicative simple Hom-Lie algebra $(A,[\cdot,\cdot],\sigma)$ is either is the Yau's twist of a direct sum of $n$-copies of 
  a simple Lie algebra $\s$, with respect to an automorphism $\sigma$ acting simply transitive on the $n$-components, up to the conjugacy of $\sigma_{\mid\s}^n$, or a simple anticommutative algebra up to isomorphism of anticommutative algebras, if the twisting map is zero.
\end{thm}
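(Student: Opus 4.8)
The plan is to combine the dichotomy of Corollary \ref{c-1} with the structure furnished by Theorem \ref{t2}, and then extract the conjugacy invariant. By Lemma \ref{l41} the twisting map of a multiplicative simple Hom-Lie algebra is either an automorphism or zero, so Corollary \ref{c-1} splits the problem into two disjoint cases. First I would dispose of the singular case $\sigma=0$: here the condition $\sigma(I)\subset I$ is vacuous, so the Hom-ideals of $(A,\brk,0)$ are exactly the ordinary ideals of the anticommutative algebra $(A,\brk)$, whence Hom-simplicity is literally the simplicity of $(A,\brk)$; moreover a Hom-Lie isomorphism between two such objects is just an isomorphism of the underlying anticommutative algebras, since the compatibility $\varphi\circ 0=0\circ\varphi$ holds automatically. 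This yields the second alternative, ``up to isomorphism of anticommutative algebras''.

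For the regular case ($\sigma$ invertible) I would invoke Theorem \ref{t2} to write $A=\bigoplus_{i=1}^n\s_i$ with $(\s_i,\brk_i)\simeq(\s,\brk_\s)$ simple, $\brk=\sigma\circ\bigl(\oplus_{i=1}^n\brk_i\bigr)$ and $\sigma=\oplus_{i=1}^n\sigma_{i\rho(i)}$ for a length-$n$ cyclic permutation $\rho$. The bracket $\oplus_i\brk_i$ is precisely the Lie bracket of the induced semisimple Lie algebra $(A,\brk_{\sigma^{-1}})\cong\s^{\oplus n}$ of Lemma \ref{l2}, and by that same lemma $\sigma$ is a Lie automorphism of it; hence $\brk=\sigma\circ\brk_{\sigma^{-1}}$ exhibits $\brk$ as the outside Yau's twist of this Lie algebra by $\sigma$, which by the Remark following the twisting principle coincides with the classical (inside) Yau's twist because $\sigma$ is multiplicative. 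Since $\rho$ has length $n$, the cyclic group $\langle\rho\rangle\cong\mathbb{Z}/n$ permutes the $n$ summands simply transitively; this is the ``automorphism acting simply transitively on the $n$-components''.

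It then remains to identify the invariant. After reindexing so that $\rho=(1\,2\,\cdots\,n)$, put $\theta:=\sigma_{\mid\s_1}^n=\sigma_{n1}\circ\sigma_{(n-1)n}\circ\cdots\circ\sigma_{12}\in\Aut(\s)$, using the fixed identifications $\s_i\simeq\s$. I would first check that its conjugacy class in $\Aut(\s)$ is independent of the chosen base summand: passing from $\s_1$ to $\s_j$ conjugates $\theta$ by the partial composite $\sigma_{(j-1)j}\circ\cdots\circ\sigma_{12}\in\Aut(\s)$, so $[\theta]$ is intrinsic. Next, given a second regular simple Hom-Lie structure $(A,\brk',\eta)$ with invariant $\theta'$, I would use Lemma \ref{l3}: a Hom-Lie isomorphism is exactly a Lie-algebra isomorphism $\varphi$ of the two induced copies of $\s^{\oplus n}$ satisfying $\varphi\circ\sigma=\eta\circ\varphi$. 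Simplicity of $\s$ forces $\varphi$ to permute the simple summands via a permutation $\tau$, and the intertwining relation forces $\tau\rho=\rho'\tau$, so restricting to one summand and composing around the cycle yields $\theta'=\psi\,\theta\,\psi^{-1}$ for an explicit $\psi\in\Aut(\s)$. Conversely, any such conjugacy can be propagated over the $n$ components along the shifts to manufacture an intertwiner $\varphi$, the sole closing-up condition being exactly $\theta'=\psi\theta\psi^{-1}$. Thus isomorphism classes correspond bijectively to conjugacy classes of $\sigma_{\mid\s}^n$, which is the assertion ``up to the conjugacy of $\sigma_{\mid\s}^n$''.

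The main obstacle is this last, two-directional conjugacy classification: both showing that an arbitrary Hom-Lie isomorphism is forced (via Lemma \ref{l3} and the simplicity of the summands) into a component permutation commuting with the cyclic shift, and, conversely, building an explicit intertwiner $\varphi$ out of a bare conjugacy $\theta'=\psi\theta\psi^{-1}$ while simultaneously matching every component map $\sigma_{i\rho(i)}$ against $\eta_{i\rho(i)}$. One must also ensure that the reduction ``$\varphi$ permutes the summands'' remains valid in characteristic $p$, where it rests on Block's description already invoked in Theorem \ref{t2}.
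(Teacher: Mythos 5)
Your proposal is correct and follows essentially the same route as the paper: the dichotomy from Lemma \ref{l41}/Corollary \ref{c-1}, the decomposition from Theorem \ref{t2}, the reduction of Hom-Lie isomorphisms via Lemma \ref{l3} to a summand-permuting Lie isomorphism intertwining the twisting maps, and the final identification of the invariant as the conjugacy class of $\sigma^n_{\mid\s}$, with the converse direction obtained by propagating a single component map around the cycle exactly as in the paper's commutative-diagram argument. Your added checks (well-definedness of the invariant under change of base summand, and the characteristic-$p$ caveat resolved by Block's theorem already absorbed into Theorem \ref{t2}) are consistent with, and slightly more explicit than, the paper's treatment.
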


\begin{proof}
For $\sigma=0$, the result is immediate. For the regular case, by the structure theorem of regular simple Hom-Lie algebras, cf. the above Theorem \ref{t2}, tells that $A=\bigoplus_{i=1}^n\s_i$ is a direct sum of a simple Lie algebra $\s$ up to isomorphism, where $\s_i\simeq\s$, $\sigma_{\mid \s_i}=\sigma_{i\rho(i)}: \s_i\longrightarrow \s_{\rho(i)}$ and $\sigma=\oplus_{i=1}^n\sigma_{i\rho(i)}$, and let $B=\bigoplus_{i=1}^n\widetilde{\s}_i$ where $\widetilde{\s}_i\simeq\widetilde{\s}$, $\eta_{\mid \widetilde{\s}_i}=\eta_{i\widetilde{\rho}(i)}: \widetilde{\s}_i\longrightarrow \widetilde{\s}_{\widetilde{\rho}(i)}$ and $\eta=\oplus_{i=1}^n\eta_{i\widetilde{\rho}(i)}$. So thanks to the Lemma \ref{l3},
\begin{align*}
    \setlength{\arraycolsep}{.9pt}
    (A,\brk,\sigma)\overset{\varphi}{\overset{\simeq}{\longrightarrow}}(B,\widetilde{\brk},\eta)&\iff\left\{
    \begin{array}{ll}
        \varphi \text{ is an isomorphism of Lie algebras}, \\
        \varphi\circ\sigma=\eta\circ\varphi
    \end{array}
\right.\\
&\hspace{-4.2cm}\iff\left\{
    \begin{array}{ll}
        \varphi=\oplus_{i=1}^n\varphi_i,~\varphi_{\mid\s_i}=\varphi_i:\s_i\longrightarrow\widetilde{\s}_{\tau(i)}\text{ is an isomorphism of}\\[1mm]
        \text{ Lie algebras, }\tau\text{ is a cyclic permutation of length }n,\\
        \varphi_{\rho(i)}\circ\sigma_{i\rho(i)}=\eta_{\tau(i)\widetilde{\rho}\circ\tau(i)}\circ\varphi_i,~1\leq i\leq n
    \end{array}
\right.\\
&\hspace{-4.2cm}\iff\left\{
    \begin{array}{ll}
        \varphi=\oplus_{i=1}^n\varphi_i,~\varphi_i:\s_i\longrightarrow\widetilde{\s}_{\tau(i)}\text{ is an isomorphism of Lie algebras},\\[1mm]
        \tau\text{ is a cyclic permutation of length }n,~1\leq i\leq n\\
        \varphi_{i}\circ\sigma_{\rho^{n-1}(i)i}\circ\sigma_{i\rho(i)}\cdots\sigma_{\rho^{n-2}(i)\rho^{n-1}(i)}=\\
        \hspace{2cm}\eta_{\widetilde{\rho}^{n-1}(\tau(i))\tau(i)}\circ\eta_{\tau(i)\widetilde{\rho}(\tau(i))}\cdots\eta_{\widetilde{\rho}^{n-2}(\tau(i))\widetilde{\rho}^{n-1}(\tau(i))}\circ\varphi_i
    \end{array}
\right.\\
&\hspace{-4.2cm}\iff\left\{
    \begin{array}{ll}
        \varphi=\oplus_{i=1}^n\varphi_i,~\varphi_i:\s_i\longrightarrow\widetilde{\s}_{\tau(i)}\text{ is a isomorphism of Lie algebras},\\[1mm]
        \tau\text{ is a cyclic permutation of length }n,~\tau\circ\rho=\widetilde{\rho}\circ\tau,\\
        \varphi_{i}\circ\sigma^n_{\mid\s_i}=\eta_{\mid\varphi_i(\s_i)}^n\circ\varphi_i,~1\leq i\leq n
    \end{array}
\right.\\
&\hspace{-4.2cm}\iff\left\{
    \begin{array}{ll}
       \exists i,~ \varphi_i:\s_i\longrightarrow\widetilde{\s}_{\tau(i)}\text{ an isomorphism of Lie algebras},\\[1mm]
       \tau\text{ is a cyclic permutation of length }n,\\
        \varphi_{i}\circ\sigma^n_{\mid\s_i}=\eta_{\mid\varphi_i(\s_i)}^n\circ\varphi_i
    \end{array}
\right.
\end{align*}

To convince ourselves that the last equivalence holds. It suffices to note that one can recover $\varphi$ from the following commutative diagram, once $\varphi_i$, for any $i$, is given 
$$\xymatrix@R+2pc@C+4pc{\s_i\ar@{->}[r]^{\sigma_{i\rho(i)}}\ar[d]_{\varphi_{i}}&\s_{\rho(i)}\ar[d]^{\varphi_{\rho(i)}}\\
\widetilde{\s}_{\tau(i)}\ar[r]^{\eta_{\tau(i)\widetilde{\rho}\circ\tau(i)}}&\widetilde{\s}_{\widetilde{\rho}\circ\tau(i)}}
$$

Conclusion, $(A,[\cdot,\cdot],\sigma)$ and $(B,\widetilde{\brk},\eta)$ are isomorphic if and only if there exists an isomorphism $\phi$ of the simple Lie ideals $\s$ and $\widetilde{\s}$ of their induced respective Lie algebras such that $\sigma^n_{\mid\s}=\phi^{-1}\circ\eta^n_{\mid\widetilde{\s}}\circ\phi$, equivalently, if there exist $i$ and $j$ such that $\sigma^n_{\mid\s_i}$ and $\eta^n_{\mid\widetilde{\s}_j}$ are conjugate. Therefore a regular simple Hom-Lie structure is unique up to the conjugacy of the $n$th power of its twisting map on any of its simple Lie ideals.
\end{proof}
\begin{rem}
We shall identify the regular simple Hom-Lie algebra $(\bigoplus_{i=1}^n\s_i,\brk_\sigma,\sigma)$ with the triple $(\s,n,[\sigma^n_{\vert\s}])$, where $\s\simeq\s_i$ and $[\sigma^n_{\vert\s}]$ is the conjugacy class of $\sigma^n_{\vert\s}$ in $\Aut(\s)$.
\end{rem}

\section{Isomorphism classes of regular simple Hom-Lie algebras}

Let $\g$ be a complex simple Lie algebra of rank $l$, $\mathfrak{h}$ a Cartan subalgebra and $\Delta$ a root system with $\p$ its base of simple roots. We have the Weyl group $W$ generated by simple reflections $\operatorname{r}_{\alpha^\vee}=\operatorname{id}-\langle\cdot,\alpha^\vee\rangle\,\alpha^\vee$, where $\alpha^\vee$ is the dual root associated to $\alpha$.

Let $-\delta$ (often labelled $\alpha_0$) be the lowest root of $\Delta$ with respect to the partial order defined by $\p$ (i.e. $\alpha\geq\beta$ if and only if $\alpha-\beta$ is the sum of simple roots or $\alpha=\beta$).
The system of vectors $\tp=\{\alpha_0,\alpha_1,\cdots,\alpha_l\}$ is called admissible, since $\langle \gamma,\mu\rangle$ is a nonpositive integer for any two distinct vectors $\mu,\gamma\in\tp$. It is dubbed the extended system of simple roots of $\Delta$ and is described and classified by the extended Dynkin diagrams, cf. \cite[Table 3, p.~228]{GOV94} or \cite[Table 8, p.~145]{Dyn57}.

 The domain relative to $\tp$
 \begin{align*}
    D_{\tp}:=\left\{         x\in\mathfrak{h}~\Big\vert\begin{array}{lll}
&\mathrm{Re}\big(\alpha(x)\big)\geq0,\text{ and whenever }\mathrm{Re}\big(\alpha(x)\big)=0&\\
    &\text{then }\im\big(\alpha(x)\big)\geq0\text{ for all } \alpha\in\{1+\alpha_0\}\cup\p  & 
    \end{array}
\right\}
\end{align*}
is a fundamental polyhedron for the action of the 
group the extended Weyl group $Q^\vee\rtimes W$ on $\mathfrak{h}$, called  (where $Q^\vee$ is the dual root
lattice, i.e. a $\mathbb{Z}$-span of the dual root system $\Delta^\vee$), it may also be regarded as the complexification of the (closed) fundamental restricted Weyl chamber relative to $\tp$.

A natural action of the group $\Aut(\tp)$ on $\mathfrak{h}$ is given by $x'=\sigma\cdot x$ if and only if $\alpha_i(x')=\sigma\circ\alpha_i(x),$ for all $\alpha_i\in\tp$.

In \cite[\S 3.6]{GOV94}, V. V. Gorbatsevich, A. L. Onishchik and È. B. Vinberg establish a classification of inner semisimple automorphisms up to conjugacy in $\Aut(\g)$. 

\begin{thm}
Any inner semisimple automorphism of a complex simple Lie algebra $\g$ is conjugate in $\Aut(\g)$ to a unique automorphism of 
the canonical form $\exp 2\pi i x,$ where $x$ is any representative element belonging to $D_{\tp}$ with respect to the action of $\Aut(\tp)$.
\end{thm}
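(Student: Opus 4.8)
The plan is to translate the statement about automorphisms into one about the connected adjoint group $\Int(\g)$ and its maximal torus, and then to read off the conjugacy from the combinatorics encoded by $\tp$. An inner semisimple automorphism is exactly a semisimple element of $\Int(\g)$, which acts faithfully on $\g$ by the adjoint representation. Since in a connected complex algebraic group every semisimple element lies in a maximal torus and all maximal tori are conjugate, I would first conjugate the given automorphism into the fixed torus $T=\exp(\mathfrak{h})$. As the map $\mathfrak{h}\to T,\ x\mapsto\exp(2\pi i\,\ad x)$ is surjective, every inner semisimple automorphism is $\Int(\g)$-conjugate to one of the canonical form $\exp 2\pi i x$ with $x\in\mathfrak{h}$; the whole content then lies in controlling the redundancy of this presentation.

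Next I would compute on root spaces: $\exp(2\pi i\,\ad x)$ acts on $\g_\alpha$ by the scalar $e^{2\pi i\,\alpha(x)}$ and trivially on $\mathfrak{h}$, so $\exp 2\pi i x=\exp 2\pi i y$ if and only if $\alpha(x-y)\in\mathbb{Z}$ for every $\alpha\in\Delta$, i.e. if and only if $x-y$ lies in the coweight lattice $P^\vee\supseteq Q^\vee$. In particular translations by $Q^\vee$ leave the automorphism unchanged, while conjugation by the normalizer $N(T)$ acts on $T$ through the Weyl group $W$. Hence moving $x$ within its $Q^\vee\rtimes W$-orbit does not change the $\Int(\g)$-conjugacy class. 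Since $D_{\tp}$ is a fundamental polyhedron for $Q^\vee\rtimes W$, I can arrange $x\in D_{\tp}$, which yields the existence of a representative of the asserted form.

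For uniqueness I must identify precisely the extra identifications, which come from two sources. First, inner conjugacy already permits translation by the full coweight lattice $P^\vee$ (all such translations fix the automorphism, since $\alpha(P^\vee)\subseteq\mathbb{Z}$), and the finite quotient $\Omega:=P^\vee/Q^\vee$ acts on $D_{\tp}$ as the group of symmetries of the extended diagram that permute its special nodes. Second, because the theorem allows conjugacy in the whole group $\Aut(\g)$, the outer part $\Aut(\g)/\Int(\g)\cong\operatorname{Out}(\g)$ enters; it is realized by diagram automorphisms of $\p$, which preserve the partial order, hence fix the highest root and the affine node $\alpha_0=-\delta$, and so act on $\tp$ and on $D_{\tp}$ as the stabilizer of $\alpha_0$ in $\Aut(\tp)$. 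The structural identity that ties everything together is $\Aut(\tp)=\Omega\rtimes\operatorname{Out}(\g)$, checked case by case from the extended Dynkin diagrams. Combining the two sources shows that the total group identifying canonical forms under $\Aut(\g)$-conjugacy is exactly the action of $\Aut(\tp)$ on $D_{\tp}$, giving uniqueness of $x$ up to $\Aut(\tp)$.

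I expect the main obstacle to be the uniqueness direction, i.e. proving that no identifications beyond $\Aut(\tp)$ occur. This requires verifying that $D_{\tp}$ is a strict fundamental domain for the complexified $Q^\vee\rtimes W$-action, where the lexicographic conditions on $\mathrm{Re}\,\alpha(x)$ and $\im\,\alpha(x)$ are exactly what pin down a unique representative on the boundary walls, and then carefully matching the residual group acting on the closed polyhedron $D_{\tp}$ with $\Omega\rtimes\operatorname{Out}(\g)$. The delicate bookkeeping is thus the treatment of the boundary of $D_{\tp}$ together with the diagram-by-diagram verification $\Aut(\tp)\cong\Omega\rtimes\operatorname{Out}(\g)$; once these are settled the theorem follows.
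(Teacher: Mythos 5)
You should be aware that the paper does not actually prove this theorem: it is quoted verbatim from Gorbatsevich--Onishchik--Vinberg \cite[\S 3.6]{GOV94}, and no proof environment follows the statement. So there is nothing in the paper to compare against line by line; what I can do is assess your sketch against the standard argument in the cited source, which it reconstructs faithfully. Your reduction is the right one: identify inner semisimple automorphisms with semisimple elements of the adjoint group $\Int(\g)$, conjugate into the maximal torus $T=\exp(2\pi i\,\ad\mathfrak{h})$, observe that the kernel of $\mathfrak{h}\to T$ is the coweight lattice $P^\vee$ and that $N(T)$ controls fusion in $T$ through $W$, so that $\Int(\g)$-conjugacy classes of canonical forms are orbits of the extended affine Weyl group $P^\vee\rtimes W=(Q^\vee\rtimes W)\rtimes\Omega$ on $\mathfrak{h}$, and finally add the outer part $\operatorname{Out}(\g)$ acting by diagram automorphisms fixing $\alpha_0$. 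The identification $\Aut(\tp)\cong\Omega\rtimes\operatorname{Out}(\g)$ then delivers both existence and uniqueness modulo $\Aut(\tp)$. This is exactly the mechanism behind the theorem (it is Kac's classification of automorphisms specialized to inner semisimple ones, with finite order relaxed to arbitrary semisimple, whence the complexified domain $D_{\tp}$ in place of the rational points of the alcove).

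Two steps in your outline genuinely carry weight and are only asserted, not proved. First, the claim that $D_{\tp}$ is a \emph{strict} fundamental domain for the complexified action of $Q^\vee\rtimes W$ --- i.e.\ that every orbit meets $D_{\tp}$ in exactly one point, with the lexicographic conditions on $\mathrm{Re}\,\alpha(x)$ and $\im\,\alpha(x)$ resolving the boundary identifications --- is the real analytic content here; the paper itself only asserts it in the surrounding text, and your proof would need to establish it (or cite it) rather than take it for granted, since uniqueness of the canonical form hinges entirely on it. Second, the fact that $N(T)$-conjugacy exhausts all of $\Int(\g)$-conjugacy between elements of $T$ (no extra fusion) and that the semidirect decomposition $\Aut(\g)=\Int(\g)\rtimes\operatorname{Out}(\g)$ can be realized by pinned diagram automorphisms normalizing $\mathfrak{h}$ and $\p$ are both standard but should be cited explicitly. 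You flag both issues yourself, so I regard the proposal as a correct and honest outline rather than a complete proof --- which puts it on the same footing as the paper, whose role for this statement is purely expository.
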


Equivalently, inner semisimple automorphisms up to conjugation in $\Aut(\g)$ are in bijective correspondence with the orbits of $D_{\tp}/\Aut(\tp)$.

\begin{cor}
There are uncountably many pairwise nonisomorphic regular simple Hom-Lie structures over a given direct sum of copies of a complex simple Lie algebra.
\end{cor}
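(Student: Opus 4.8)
The plan is to reduce the corollary to a count of conjugacy classes in $\Aut(\g)$ and then feed in the classification of inner semisimple automorphisms recalled just above. Fix the complex simple Lie algebra $\g$ (playing the role of $\s$ in Theorem \ref{t3}) and an integer $n\geq 1$, and consider the regular simple Hom-Lie structures supported on $A=\bigoplus_{i=1}^n\s_i$ with $\s_i\simeq\g$. By the structure theorem and the ensuing identification of such a structure with the triple $(\g,n,[\sigma^n_{\mid\g}])$, two of them are isomorphic precisely when the conjugacy classes of $\sigma^n_{\mid\g}$ in $\Aut(\g)$ agree; so it suffices to realize uncountably many distinct conjugacy classes of $\Aut(\g)$ as such $n$-th powers. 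In fact I would show that \emph{every} $\psi\in\Aut(\g)$ arises this way: take $\rho$ the $n$-cycle $i\mapsto i+1 \pmod n$, identify each $\s_i$ with $\g$, and set $\sigma_{i\rho(i)}=\Id$ for $1\leq i\leq n-1$ and $\sigma_{n\rho(n)}=\psi$; then $\sigma=\oplus_{i=1}^n\sigma_{i\rho(i)}$ together with $\brk=\sigma\circ\oplus_{i=1}^n\brk_i$ defines a regular Hom-Lie algebra with $\sigma^n_{\mid\s_1}=\psi$. Its induced Lie algebra is the direct sum $\bigoplus_i\s_i$, whose ideals are the subsums $\bigoplus_{i\in S}\s_i$; by the characterization of Hom-ideals for regular Hom-Lie algebras such a subsum is a Hom-ideal exactly when $S$ is $\rho$-stable, and since $\rho$ is an $n$-cycle only $S=\emptyset$ and $S=\{1,\dots,n\}$ qualify, so the structure is simple. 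Hence the isomorphism classes over $(\g,n)$ are in bijection with the full set of conjugacy classes of $\Aut(\g)$.

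It then remains to prove that $\Aut(\g)$ has uncountably many conjugacy classes, and I would already obtain this from the inner semisimple automorphisms alone. By the theorem of Gorbatsevich--Onishchik--Vinberg recalled above, these are in bijection, up to conjugacy in $\Aut(\g)$, with the orbit space $D_{\tp}/\Aut(\tp)$. Here $\Aut(\tp)$ is the finite group of symmetries of the extended Dynkin diagram, while $D_{\tp}$ is a fundamental polyhedron of positive dimension inside the Cartan subalgebra $\mathfrak{h}$, hence an uncountable set; quotienting an uncountable set by a finite group leaves an uncountable quotient, since each orbit is finite. Therefore $D_{\tp}/\Aut(\tp)$, and \emph{a fortiori} the set of conjugacy classes of $\Aut(\g)$, is uncountable, which proves the corollary.

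The genuine input is the cited classification of inner semisimple automorphisms; once it is granted, the uncountability of $D_{\tp}$ is immediate from its positive dimension, so the \emph{main point to be careful about} is rather the bookkeeping of the realization step. Concretely, one must confirm that letting $\rho$ be a single $n$-cycle does force Hom-simplicity (via the description of Hom-ideals as $\sigma$-stable ideals of the induced Lie algebra), so that the uncountably many conjugacy classes we produce correspond to honest regular \emph{simple} Hom-Lie structures rather than merely regular ones.
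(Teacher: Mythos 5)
Your proposal is correct and follows essentially the route the paper intends: the paper states this corollary without proof as an immediate consequence of the Gorbatsevich--Onishchik--Vinberg parametrization of inner semisimple automorphisms by $D_{\tp}/\Aut(\tp)$ (uncountable, since $D_{\tp}$ has positive dimension and $\Aut(\tp)$ is finite) combined with Theorem \ref{t3}, which identifies isomorphism classes with conjugacy classes of $\sigma^n_{\mid\s}$ in $\Aut(\s)$. Your explicit realization step --- checking that every $\psi\in\Aut(\g)$ occurs as $\sigma^n_{\mid\s_1}$ for a genuinely \emph{simple} regular structure via a single $n$-cycle --- is a detail the paper leaves implicit, and it is a worthwhile addition rather than a deviation.
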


Next we follow two other directions. One allows, by direct means, to establish the infinity of the number of regular simple Hom-Lie structures on a direct sum of copies of classical simple Lie algebra over an algebraically closed field of characteristic $\neq 2$ and $3$. The second establishes, by means of results from algebraic geometry, that the finiteness of conjugacy classes in the automorphism group of a nonsolvable Lie algebra over an arbitrary field depends entirely on the finiteness of the underlying field.

\begin{thm}\label{t53}
If $\g$ is a simple Lie algebra over an algebraically closed field of characteristic zero or a classical simple Lie algebra over an algebraically closed field of characteristic $p>3$. Then the trace on the automorphism group $\Aut(\g)$ is surjective.
\end{thm}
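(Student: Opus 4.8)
The plan is to produce, for an arbitrary scalar $c\in\F$, an element of $\Aut(\g)$ of trace $c$; since the trace of any automorphism lies in $\F$, this is exactly surjectivity onto $\F$. I would work entirely inside the inner automorphism group and exploit a root space decomposition. Under the stated hypotheses, $G:=\Int(\g)$ is a connected semisimple algebraic group of adjoint type over $\F$ acting on $\g$ through its adjoint representation, and a Cartan subalgebra $\mathfrak{h}\subset\g$ determines a maximal torus $T\subset G$ together with the decomposition $\g=\mathfrak{h}\oplus\bigoplus_{\alpha\in\Delta}\g_\alpha$. For $t\in T$ the automorphism $\operatorname{Ad}(t)$ fixes $\mathfrak{h}$ pointwise (since $T$ is commutative) and acts on $\g_\alpha$ by the scalar $\alpha(t)$, whence $\operatorname{tr}\big(\operatorname{Ad}(t)\big)=l+\sum_{\alpha\in\Delta}\alpha(t)$, where $l=\dim\mathfrak{h}=\operatorname{rank}\g$.

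\emph{Reducing to one variable.} I would restrict $\operatorname{tr}\circ\operatorname{Ad}$ to a one-parameter subgroup $\lambda\in X_*(T)$, obtaining the Laurent polynomial $f(s):=\operatorname{tr}\big(\operatorname{Ad}(\lambda(s))\big)=l+\sum_{\alpha\in\Delta}s^{\langle\alpha,\lambda\rangle}$ for $s\in\F^{*}$. Choosing $\lambda$ generic, so that the integers $\langle\alpha,\lambda\rangle$ are pairwise distinct as $\alpha$ runs over $\Delta$, is possible because the bad locus is a finite union of proper rational hyperplanes $\langle\alpha-\beta,\cdot\rangle=0$ inside the full-rank lattice $X_*(T)$. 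Because roots occur in pairs $\pm\alpha$, the top exponent $M:=\max_\alpha\langle\alpha,\lambda\rangle$ is positive, the bottom exponent $m:=\min_\alpha\langle\alpha,\lambda\rangle$ is negative, each is attained by a single root, and hence both extreme coefficients of $f$ equal $1_{\F}$.

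\emph{Surjectivity of $f$.} Fix $c\in\F$ and set $N:=-m>0$. Multiplying $f(s)=c$ by $s^{N}$ turns it into $h(s)-c\,s^{N}=0$, where $h(s):=s^{N}f(s)$ is a genuine polynomial with constant term $h(0)=1_{\F}\neq0$ and $\deg h=M-m>N$. Thus $h(s)-c\,s^{N}$ is a nonconstant polynomial over the algebraically closed field $\F$, so it has a root, and that root is nonzero since $h(0)\neq0$. Any such $s_{0}\in\F^{*}$ satisfies $f(s_{0})=c$, so $\operatorname{Ad}(\lambda(s_{0}))\in\Aut(\g)$ has trace $c$; as $c$ was arbitrary, the trace is surjective. (Note that a mere monomial or constant-plus-monomial would fail to be onto, which is why I insist on having both a positive and a negative exponent present with nonzero extreme coefficients.)

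\emph{Main obstacle.} The elementary analysis above is robust; the real content, and the step I expect to need the most care, is the structural identification of $\Int(\g)$ with the adjoint algebraic group, the weight decomposition of its adjoint action, and the realization of the torus $T$ by genuine semisimple automorphisms. This is classical in characteristic $0$, where every simple $\g$ is the Lie algebra of its adjoint group with the stated decomposition; in characteristic $p>3$ it is precisely what the restriction to classical types secures, since small primes are exactly the regime where the Chevalley structure, and with it the computation of the extreme coefficients, could degenerate.
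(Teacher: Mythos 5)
Your proof is correct, and at bottom it follows the same strategy as the paper's: both produce automorphisms lying in a maximal torus acting diagonally on the root space decomposition $\g=\mathfrak{h}\oplus\bigoplus_{\alpha\in\Delta}\g_\alpha$, express the trace as a Laurent polynomial in a single scalar, and conclude by clearing denominators and invoking algebraic closedness. The difference is in the choice of cocharacter, and it matters. The paper takes, for each simple root $\alpha_i$, the automorphism $\varphi_i$ scaling $\g_\beta$ by $c^{m_i(\beta)}$ (the fundamental coweight direction) and then computes $\operatorname{tr}(\varphi_i)$ explicitly, type by type, from the Hasse diagrams of the positive root posets --- nine families of formulas, after which the surjectivity of each resulting Laurent polynomial is left implicit. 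You instead take a \emph{generic} cocharacter $\lambda$, so that the exponents $\langle\alpha,\lambda\rangle$ are pairwise distinct and nonzero; this costs nothing (the bad locus is a finite union of proper hyperplanes in $X_*(T)\otimes\mathbb{Q}$) and buys two things. First, it eliminates the case analysis entirely. Second, and more substantively, it forces the two extreme coefficients of the Laurent polynomial to equal $1_{\F}$, hence to be nonzero in every characteristic; the paper's formulas carry coefficients such as $i(l+1-i)$ or $\frac{l(l+1)}{2}$ which can vanish modulo $p$, so its argument in characteristic $p>3$ tacitly requires the additional check, not carried out in the paper, that for each type some $\varphi_i$ retains a nonvanishing extreme coefficient. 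Your version is therefore cleaner and, in positive characteristic, more complete. The one point to keep explicit is the one you already flag: that for classical $\g$ in characteristic $p>3$ the adjoint group supplies a torus acting on $\g$ with one-dimensional root spaces and trivially on the zero weight space (whose dimension need not equal the rank in small quotient cases such as $\mathfrak{psl}$, though this does not affect the argument) --- which is exactly what the restriction to classical types is for.
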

\begin{proof}
Let $\mathfrak{h}$ be a Cartan subalgebra of dimension $l$ and $\Delta$ its root system, we have the decomposition 
$$\g=\mathfrak{h}\oplus\bigoplus_{\alpha\in\Delta}\g_\alpha$$
where $\g_\alpha=\{x\in\g\vert~ [h,x]=\alpha(h)x\text{ for all }h\in\mathfrak{h}\}$ is of dimension $1$. For a simple root $\alpha_i\in\p$, we set $\Lambda_{i}=\{\beta=\sum_{j=1}^lm_j(\beta){\alpha_j}\in\Delta\vert~m_i(\beta)\neq0\}$ and $n_i$ its size. Let $c_i$ be a nonzero scalar, with respect to the Chevalley basis $\{x_\alpha,h_i\vert~\alpha\in\Delta\text{ and  }1\leq i\leq\dim\mathfrak{h}\}$, we define the automorphism $\varphi_i$ as follows,
\begin{align*}
    \left\{\begin{array}{ll}
        \varphi_i(h)=h,&\text{for all } h\in\mathfrak{h},\\
         \varphi_i(x_\beta)=c_i^{m_i(\beta)} x_\beta,&\text{for all } \beta=\sum_{j=1}^lm_j(\beta)x_{\alpha_j}.
    \end{array}
    \right.
\end{align*}
Hence,
\begin{align*}
    \operatorname{tr}(\varphi_i)&=\dim\mathfrak{h}+\sum_{\beta\in\Delta}c_i^{m_i(\beta)}\\
    &=\dim\mathfrak{h}+\vert\Delta\vert-n_i+\sum_{\beta\in\Lambda_i}c_i^{m_i(\beta)}\\
    &=\dim\g-n_i+\sum_{\beta\in\Lambda_i}c_i^{m_i(\beta)}
\end{align*}
Let $\p=\{\alpha_1,\cdots,\alpha_l\}$ be a basis of $\Delta$, with the ordering follows that of Dynkin diagrams in \cite{Bou75}. From Hasse diagrams of poset of positive root systems and by induction, one obtains
\begin{itemize}
    \item $\mathrm{A}_l~(l\geq1)$ : for all $1\leq i\leq l$, $$\operatorname{tr}(\varphi_i)=\dim\g+i\left(l+1-i\right)\left(c_i+c_i^{-1}-2\right).$$
    \item $\mathrm{B}_l~(l\geq2)$ : for all $1\leq i\leq l$,
\begin{align*}
       \hspace{-3mm} \operatorname{tr}(\varphi_i)=\dim\g-i\left(4l+1-3i\right)+i\left(2(l-i)+1\right)\left(c_i+c_i^{-1}\right)+\dfrac{(i-1)i}{2}\left(c_i^2+c_i^{-2}\right).
    \end{align*}
    \item $\mathrm{C}_l~(l\geq3)$ : for all $1\leq i\leq l-1$, 
        \begin{align*}
        \left\{
        \begin{array}{ll}
        \operatorname{tr}(\varphi_i)=\dim\g-i\left(4l+1-3i\right)+2i\left(l-i\right)\left(c_i+c_i^{-1}\right)+\dfrac{i(i+1)}{2}\left(c_i^2+c_i^{-2}\right),\\
        \operatorname{tr}(\varphi_l)=\dim\g+\dfrac{l(l+1)}{2}\left(c_l+c_l^{-1}-2\right).
        \end{array}
        \right.
    \end{align*}
    \item $\mathrm{D}_l~(l\geq3)$ : for all $1\leq i\leq l-2$, 
    \begin{align*}
        \left\{
        \begin{array}{ll}
    \operatorname{tr}(\varphi_i)=\dim\g-i\left(4l-1-3i\right)+2i(l-i)\left(c_i+c_i^{-1}\right)+\dfrac{(i-1)i}{2}\left(c_i^2+c_i^{-2}\right),\\
    \operatorname{tr}(\varphi_j)=\dim\g+\dfrac{(l-1)l}{2}\left(c_j+c_j^{-1}-2\right),~\text{ for  }j=l-1\text{ or }l.
        \end{array}
        \right.
    \end{align*}
\item $\mathrm{E}_6$ :  
    \begin{align*}
        \left\{
        \begin{array}{ll}
    \operatorname{tr}(\varphi_i)=46+16\left(c_i+c_i^{-1}\right),\text{ for }i=1\text{ or }6,\\
    \operatorname{tr}(\varphi_i)=28+20\left(c_i+c_i^{-1}\right)+5\left(c_i^2+c_i^{-2}\right),\text{ for }i=3\text{ or }5,\\
    \operatorname{tr}(\varphi_2)=36+20\left(c_2+c_2^{-1}\right)+\left(c_2^2+c_2^{-2}\right),\\
    \operatorname{tr}(\varphi_4)=20+18\left(c_4+c_4^{-1}\right)+9\left(c_4^2+c_4^{-2}\right)+2\left(c_4^3+c_4^{-3}\right).
        \end{array}
        \right.
    \end{align*}
\item $\mathrm{E}_7$ :  
    \begin{align*}
        \left\{
        \begin{array}{ll}
    \operatorname{tr}(\varphi_1)=67+32\left(c_1+c_1^{-1}\right)+\left(c_1^2+c_1^{-2}\right),\\
    \operatorname{tr}(\varphi_2)=49+35\left(c_2+c_2^{-1}\right)+7\left(c_2^2+c_2^{-2}\right),\\
    \operatorname{tr}(\varphi_3)=39+30\left(c_3+c_3^{-1}\right)+15\left(c_3^2+c_3^{-2}\right)+2\left(c_3^3+c_3^{-3}\right)\\
    \operatorname{tr}(\varphi_4)=27+24\left(c_4+c_4^{-1}\right)+18\left(c_4^2+c_4^{-2}\right)+8\left(c_4^3+c_4^{-3}\right)+3\left(c_4^4+c_4^{-4}\right),\\
    \operatorname{tr}(\varphi_5)=33+30\left(c_5+c_5^{-1}\right)+15\left(c_5^2+c_5^{-2}\right)+5\left(c_5^3+c_5^{-3}\right),\\
    \operatorname{tr}(\varphi_6)=49+32\left(c_6+c_6^{-1}\right)+10\left(c_6^2+c_6^{-2}\right),\\
    \operatorname{tr}(\varphi_7)=79+27\left(c_7+c_7^{-1}\right).
    \end{array}
        \right.
    \end{align*}
\item $\mathrm{E}_8$ :  
    \begin{align*}
        \left\{
        \begin{array}{ll}
    \operatorname{tr}(\varphi_1)=&92+64\left(c_1+c_1^{-1}\right)+14\left(c_1^2+c_1^{-2}\right),\\
    \operatorname{tr}(\varphi_2)=&64+56\left(c_2+c_2^{-1}\right)+28\left(c_2^2+c_2^{-2}\right)+8\left(c_2^3+c_2^{-3}\right),\\
    \operatorname{tr}(\varphi_3)=&52+42\left(c_3+c_3^{-1}\right)+35\left(c_3^2+c_3^{-2}\right)+14\left(c_3^3+c_3^{-3}\right)+7\left(c_3^4+c_3^{-4}\right),\\
    \operatorname{tr}(\varphi_4)=&36+30\left(c_4+c_4^{-1}\right)+30\left(c_4^2+c_4^{-2}\right)+20\left(c_4^3+c_4^{-3}\right)\\
    &+15\left(c_4^4+c_4^{-4}\right)+6\left(c_4^5+c_4^{-5}\right)+5\left(c_4^6+c_4^{-6}\right),\\
    \operatorname{tr}(\varphi_5)=&40+40\left(c_5+c_5^{-1}\right)+30\left(c_5^2+c_5^{-2}\right)+20\left(c_5^4+c_5^{-3}\right)\\
    &+10\left(c_5^4+c_5^{-4}\right)+4\left(c_5^5+c_5^{-5}\right),\\
    \operatorname{tr}(\varphi_6)=&54+48\left(c_6+c_6^{-1}\right)+30\left(c_6^2+c_6^{-2}\right)+16\left(c_6^3+c_6^{-3}\right)+3\left(c_6^4+c_6^{-4}\right),\\
    \operatorname{tr}(\varphi_7)=&82+54\left(c_7+c_7^{-1}\right)+27\left(c_7^2+c_7^{-2}\right)+2\left(c_7^3+c_7^{-3}\right),\\
    \operatorname{tr}(\varphi_8)=&134+56\left(c_8+c_8^{-1}\right)+\left(c_8^2+c_8^{-2}\right).
        \end{array}
        \right.
    \end{align*}
\item $\mathrm{F}_4$ :  
    \begin{align*}
        \left\{
        \begin{array}{ll}
    \operatorname{tr}(\varphi_1)=22+14\left(c_1+c_1^{-1}\right)+\left(c_1^2+c_1^{-2}\right),\\
    \operatorname{tr}(\varphi_2)=12+12\left(c_2+c_2^{-1}\right)+6\left(c_2^2+c_2^{-2}\right)+2\left(c_2^3+c_2^{-3}\right),\\
    \operatorname{tr}(\varphi_3)=12+6\left(c_3+c_3^{-1}\right)+9\left(c_3^2+c_3^{-2}\right)+2\left(c_3^3+c_3^{-3}\right)+3\left(c_3^4+c_3^{-4}\right),\\
    \operatorname{tr}(\varphi_4)=22+8\left(c_4+c_4^{-1}\right)+7\left(c_4^2+c_4^{-2}\right).
        \end{array}
        \right.
    \end{align*}
\item $\mathrm{G}_2$ :  
    \begin{align*}
        \left\{
        \begin{array}{ll}
    \operatorname{tr}(\varphi_1)=4+2\left(c_1+c_1^{-1}\right)+\left(c_1^2+c_1^{-2}\right)+2\left(c_1^3+c_1^{-3}\right),\\
    \operatorname{tr}(\varphi_2)=4+4\left(c_2+c_2^{-1}\right)+\left(c_2^2+c_2^{-2}\right).
        \end{array}
        \right.
    \end{align*}
\end{itemize}
\end{proof}
\begin{rem}\label{r54}
If the corresponding group of $\g$ is compact then the trace is bounded and hence not surjective. Particularly, if $\g$ is $\F$-anisotropic over the reals or a $p$-adic field.
\end{rem}
\begin{thm}\label{t55}
Let $\g$ be a nonsolvable Lie algebra over a field $\F$. The automorphism group $\Aut(\g)$ has only a finite number of conjugacy classes if and only if $\F$ is finite. Moreover, if $\F$ is perfect then the assumption of nonsolvability can be weakened to non-nilpotency.
\end{thm}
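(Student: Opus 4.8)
\emph{The plan is to} prove both implications, the finite-field direction being immediate and the infinite-field direction resting on the algebraic-group structure of $\Aut(\g)$. For the easy direction, if $\F$ is finite then $\g$ is a finite set (it is finite-dimensional over $\F$), so $\Aut(\g)\subseteq\mathrm{GL}(\g)$ is a finite group and a fortiori has finitely many conjugacy classes; this uses neither nonsolvability nor perfectness. For the converse \emph{I would} argue by contraposition: assuming $\F$ infinite, I must exhibit infinitely many conjugacy classes. The key reduction is that the trace $\operatorname{tr}\colon\Aut(\g)\to\F$, $\varphi\mapsto\operatorname{tr}(\varphi)$ (trace of $\varphi$ as an $\F$-linear operator on $\g$), is constant on conjugacy classes, since conjugation takes place inside $\mathrm{GL}(\g)$. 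Hence it suffices to produce automorphisms realizing infinitely many distinct trace values.

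The central construction is a nontrivial torus in $\Aut(\g)$ defined over $\F$. Recall that $\Aut(\g)$ is a linear algebraic $\F$-group with Lie algebra $\mathrm{Der}(\g)$, and that $\ad(\g)\cong\g/Z(\g)$ embeds in $\mathrm{Der}(\g)$. Since $Z(\g)$ is abelian, $\g$ nonsolvable forces $\g/Z(\g)$ nonsolvable, so $\mathrm{Der}(\g)$ is a nonsolvable Lie algebra; as the Lie algebra of a solvable algebraic group is solvable, the identity component $G:=\Aut(\g)^\circ$ is a nonsolvable smooth connected affine $\F$-group. By Grothendieck's theorem $G$ admits a maximal torus $T$ defined over $\F$, whose dimension equals the rank of $G_{\overline{\F}}$. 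Were $T$ trivial, $G_{\overline{\F}}$ would have trivial maximal torus and hence be unipotent, thus solvable---a contradiction. Therefore $T$ is a nontrivial $\F$-torus sitting inside $\Aut(\g)\subseteq\mathrm{GL}(\g)$.

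It remains to run the trace argument on $T$. Being a faithful subtorus of $\mathrm{GL}(\g)$, $T$ acts on $\g\otimes\overline{\F}$ with a nontrivial weight, so $\operatorname{tr}|_T=\sum_\lambda(\dim\g_\lambda)\lambda$ is a non-constant regular function on $T$. Tori are unirational over their field of definition, so for $\F$ infinite the set $T(\F)$ is Zariski-dense in $T$, in particular infinite. A non-constant regular function on the irreducible variety $T$ cannot take only finitely many values on a dense subset (otherwise a product of the corresponding linear factors would vanish on $T$, forcing the function to be constant), so $\operatorname{tr}$ takes infinitely many values on $T(\F)\subseteq\Aut(\g)$. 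This yields infinitely many conjugacy classes, completing the contrapositive and hence the equivalence.

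Finally, for the refinement over a perfect field \emph{I would} replace nonsolvability by non-nilpotency as follows. By Engel's theorem, non-nilpotency of $\g$ gives some $x$ with $\ad x$ not nilpotent; over a perfect field the semisimple part $(\ad x)_s$ of its Jordan decomposition is again a derivation, hence a nonzero semisimple element of $\mathrm{Der}(\g)=\mathrm{Lie}(\Aut(\g))$ defined over $\F$. Its algebraic hull is a nontrivial $\F$-torus in $\Aut(\g)$, and the argument of the previous paragraph applies verbatim. \emph{The main obstacle} throughout is the descent to $\F$: one must secure a torus, and enough of its rational points, over the possibly non-split, possibly imperfect base field, which is exactly what Grothendieck's existence theorem and the unirationality of tori provide. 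Perfectness enters the non-nilpotent version precisely to guarantee that the semisimple part of a derivation is itself a derivation defined over $\F$; this can fail for purely inseparable $\ad x$ over imperfect fields, which is why nonsolvability is the correct hypothesis in general.
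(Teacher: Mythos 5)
Your strategy is at bottom the same as the paper's---produce a nontrivial torus (the paper: a nontrivial connected reductive subgroup) of $\Aut(\g)$ defined over $\F$, invoke Zariski-density of its rational points over an infinite field, and separate conjugacy classes by a similarity invariant---and in characteristic zero your write-up is essentially complete and even more explicit than the paper's about where the torus comes from. The easy finite-field direction and the density step (tori are unirational, hence have dense rational points over any infinite field, with no perfectness needed) are fine. But two steps genuinely fail in positive characteristic, and the theorem is asserted over an arbitrary field. The first is your choice of invariant. You justify non-constancy of $\operatorname{tr}\vert_T$ by the existence of a nontrivial weight, but in characteristic $p$ one has $\operatorname{tr}\vert_T=\sum_\lambda(\dim\g_\lambda \bmod p)\,\lambda$, so a nontrivial faithful torus has identically constant trace as soon as every nontrivial weight space has dimension divisible by $p$ (e.g.\ $t\mapsto\operatorname{diag}(t,\dots,t,t^{-1},\dots,t^{-1})$ with $p$ entries of each weight has trace $0$). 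The paper's use of the full characteristic polynomial is not cosmetic: a connected group with constant characteristic polynomial consists of unipotent elements and is therefore unipotent by Kolchin's theorem, and no mod-$p$ cancellation can intervene. Your argument is repaired by substituting the characteristic polynomial for the trace.

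The second gap is the source of the torus. You assert that $\Aut(\g)$ is smooth with Lie algebra $\mathrm{Der}(\g)\supseteq\ad(\g)$ and deduce that $G=\Aut(\g)^{\circ}$ is nonsolvable. In characteristic $p$ the automorphism group scheme of a Lie algebra need not be smooth, and the Lie algebra of the underlying reduced group can be strictly smaller than $\mathrm{Der}(\g)$: for the Witt algebra $W(1;1)$, simple of dimension $p$, every automorphism is induced by an automorphism of $\F[x]/(x^p)$, so $\Aut(W(1;1))$ is a connected \emph{solvable} group of dimension $p-1$, while $\mathrm{Der}(W(1;1))\supseteq\ad\,W(1;1)$ is simple of dimension $p$. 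So the implication ``$\g$ nonsolvable $\Rightarrow G$ nonsolvable'' is false as stated; what is actually needed is only the weaker claim that $G$ is not unipotent, i.e.\ contains a nontrivial torus (true in the Witt example, since $\mathbb{G}_m$ acts by rescaling $x$, but not by your route). To be fair, the paper also leaves this existence claim implicit by simply positing a connected reductive subgroup. The same smoothness issue undermines your perfect-field refinement, where you place $(\ad x)_s$ inside $\mathrm{Lie}(\Aut(\g))$ and pass to its algebraic hull; the paper instead uses perfectness for a different purpose, namely Rosenlicht's density theorem for rational points of solvable groups, which is the input that actually replaces reductivity when $\g$ is merely non-nilpotent.
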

\begin{proof}
Let $G$ be a connected reductive subgroup of $\Aut(\g)$. Over an infinite field $\F$, thanks to \cite[Exp. XIV]{DG70} or \cite[Corollary 18.3]{Bor91}, $G(\F)$ is Zariski-dense in the underlying algebraic group $G$. Therefore, we can assume that $\F$ is algebraically closed without any lost of generality. If the number of characteristic polynomials is finite then by connectedness it is reduced to one, hence every element is unipotent and so $G$ is unipotent, a contradiction. 
Finally, since characteristic polynomial is invariant under similarity, there are infinitely many conjugacy classes in $\Aut(\g)$.
On the other hand, if $\F$ is perfect then by M. Rosenlicht's result on Zariski density in \cite{Ros57}, the assumption of nonsolvability of $\g$ can be weakened to non-nilpotency.
\end{proof}

\begin{cor}
Over an infinite field, there are infinitely many isomorphism classes of regular simple Hom-Lie structures on a given pair $(\g,n)$, i.e a direct sum of $n$-copies of $\g$. On the other hand, over finite fields there are at least two.
\end{cor}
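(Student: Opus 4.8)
The plan is to reduce the entire statement to a count of conjugacy classes in $\Aut(\g)$, reading off everything from the classification already in hand. By Theorem \ref{t3} and the remark following it, a regular simple Hom-Lie structure on the pair $(\g,n)$ is recorded, up to isomorphism, by the conjugacy class $[\sigma^n_{\vert\g}]$ of $\sigma^n_{\vert\g}$ in $\Aut(\g)$, and two such structures are isomorphic precisely when these conjugacy classes coincide. Thus the assignment $(\bigoplus_{i=1}^n\s_i,\brk_\sigma,\sigma)\mapsto[\sigma^n_{\vert\g}]$ descends to a well-defined injection from isomorphism classes into $\Aut(\g)/\!\sim$. First I would check that this injection is onto: given any $\psi\in\Aut(\g)$, I set $A=\bigoplus_{i=1}^n\s_i$ with each $\s_i\simeq\g$, let $\rho$ be the $n$-cycle $(1\,2\,\cdots\,n)$, and take $\sigma$ to send $\s_i$ to $\s_{\rho(i)}$ by the identity identification for $i<n$ and by $\psi$ for $i=n$; then $\sigma$ acts simply transitively on the components and $\sigma^n_{\vert\s_1}=\psi$, so by Theorem \ref{t2} the triple $(A,\brk_\sigma,\sigma)$ is a regular simple Hom-Lie structure realizing $[\psi]$. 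Hence the isomorphism classes of regular simple Hom-Lie structures on $(\g,n)$ are in bijection with the conjugacy classes of $\Aut(\g)$, and the two assertions become statements about the cardinality of $\Aut(\g)/\!\sim$.

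For the infinite case, $\g$ is simple and therefore nonsolvable, so Theorem \ref{t55} applies directly and tells us that $\Aut(\g)$ has infinitely many conjugacy classes. Through the bijection above this yields infinitely many pairwise nonisomorphic regular simple Hom-Lie structures on $(\g,n)$. When $\F$ is moreover algebraically closed of one of the admissible characteristics, one could alternatively invoke Theorem \ref{t53}: the trace is then surjective on $\Aut(\g)$, so it takes infinitely many values, and since the trace is a similarity invariant this already forces infinitely many conjugacy classes.

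For the finite case, Theorem \ref{t55} gives that $\Aut(\g)/\!\sim$ is finite, so it remains only to bound it below by $2$. Here I would argue that $\Aut(\g)$ is a nontrivial finite group: being simple, $\g$ is non-nilpotent, so its automorphism group contains the positive-dimensional adjoint group and in particular more than the identity. Since a finite group possessing a single conjugacy class is trivial, $\Aut(\g)$ must have at least two conjugacy classes, concretely the class of $\Id$ and that of a nontrivial automorphism, and hence $(\g,n)$ carries at least two nonisomorphic regular simple Hom-Lie structures, namely the one with $\sigma^n_{\vert\g}=\Id$ and one with $\sigma^n_{\vert\g}\neq\Id$.

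The main obstacle I anticipate is not the infinite case, which is handed to us by Theorem \ref{t55}, but pinning down the lower bound over finite fields of positive characteristic: one must be sure that $\Aut(\g)$ genuinely contains a nonidentity $\F$-automorphism rather than only geometric ones defined over $\overline{\F}$. This is where I would spend the care, using that the connected automorphism group of a simple Lie algebra is a semisimple algebraic group of positive dimension whose group of $\F$-points, for $\F$ finite, has order a nonconstant polynomial in $|\F|$ and is therefore nontrivial.
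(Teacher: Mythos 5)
Your proposal is correct and follows the same route the paper intends: the corollary is stated without proof precisely because it is the combination of Theorem \ref{t3} (isomorphism classes of regular simple Hom-Lie structures on $(\g,n)$ correspond to conjugacy classes of $\sigma^n_{\vert\s}$ in $\Aut(\g)$) with Theorem \ref{t55}, which is exactly your reduction. Your added care about realizing every conjugacy class and about the nontriviality of $\Aut(\g)(\F)$ over finite fields fills in details the paper leaves implicit, and is sound.
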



\section{Strongly Simple Hom-Lie algebras}
In this section, we establish a new simplicity criterion for Lie algebras. Then, we show that a semisimple Hom-Lie algebra needs not to be a direct sum of simple Hom-Lie algebras. Further, we provide many examples of nonsimple, solvable and nilpotent anticommutative algebras that can be endowed with the structure of simple Hom-Lie algebras. Also, we introduce the class of strongly simple Hom-Lie algebras and we characterize it.

\begin{lem}\label{l5}
A simple Lie algebra $\mathfrak{g}$ has the property that for any nonzero vector $x$, $\mathrm{rk} (\ad x)\geq 2$.
\end{lem}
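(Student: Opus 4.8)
The plan is to prove the contrapositive: assume a nonzero $x\in\g$ with $\mathrm{rk}(\ad x)\le 1$ and manufacture a proper nonzero ideal, contradicting simplicity. Since $\im(\ad x)=[x,\g]$ is stable under $\ad x$ (it is the image of $\ad x$) and has dimension at most one, I may write $[x,\g]=\F y$ and record the scalar $\lambda$ with $[x,y]=\lambda y$. If $\mathrm{rk}(\ad x)=0$ then $x$ is central, so $\F x$ is a proper nonzero ideal and we are done; the real content is the rank-one case, which I split according to whether $\lambda\neq 0$ or $\lambda=0$. Throughout I write $[x,z]=f(z)y$ for the rank-one linear functional $f$, so that $\lambda=f(y)$, and I use that $\ad x$ is a derivation of $\g$, i.e. $f([z,w])y=f(z)[y,w]+f(w)[z,y]$ for all $z,w$.

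For $\lambda\neq 0$ the idea is that $\F y$ is itself an ideal. Feeding $w=y$ into the derivation identity and using $f(y)=\lambda$ gives $f([z,y])y=\lambda[z,y]$, whence $[z,y]\in\F y$ for every $z$. Thus $[\g,y]\subseteq\F y$, so $\F y$ is a one-dimensional ideal; it is proper because a simple Lie algebra is nonabelian of dimension at least $2$, contradicting simplicity.

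The harder, and decisive, case is $\lambda=0$, where $\ad x$ is a rank-one nilpotent, $(\ad x)^2=0$, and the ideal $\F y$ no longer helps. Here I would pass to the Killing form $B(a,b)=\operatorname{tr}(\ad a\circ\ad b)$. Since $\ad x\circ\ad z$ is the rank-one operator $w\mapsto f([z,w])y$, its trace is $B(x,z)=f([z,y])$. Fixing $w_0$ with $f(w_0)=1$, the derivation identity yields $[z,y]=f([z,w_0])\,y-f(z)[y,w_0]$; because $\lambda=0$ gives $f(y)=0$, and the same identity with $z=y$ gives $f([y,w_0])=0$, applying $f$ to this expression shows $B(x,z)=f([z,w_0])f(y)-f(z)f([y,w_0])=0$ for all $z$. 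Hence $x$ lies in the radical of $B$, which is an ideal by invariance of $B$, so simplicity forces $B\equiv 0$. In characteristic zero this is already a contradiction, since a simple Lie algebra has nondegenerate Killing form, and the argument works verbatim whenever $B$ is nondegenerate. The main obstacle is exactly the positive-characteristic simple Lie algebras whose Killing form degenerates (or vanishes identically): there the trace form gives no leverage, and one must instead exclude rank-one nilpotent $\ad x$ through the structure of the centralizer $\Ker(\ad x)$, which in this case is a codimension-one subalgebra in which $x$ is central, or, failing an elementary argument, reduce to the nondegenerate case via the classification of simple Lie algebras in positive characteristic.
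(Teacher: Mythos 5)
Your reduction to the rank-one case and your treatment of the subcase $[x,y]=\lambda y$ with $\lambda\neq 0$ are correct: the derivation identity does force $[\g,y]\subseteq\F y$, and a one-dimensional ideal contradicts simplicity. The genuine gap is exactly where you locate it, namely the nilpotent subcase $\lambda=0$. There your computation only establishes that $x$ lies in the radical of the Killing form and hence, by simplicity, that $B\equiv 0$; this is a contradiction only when the Killing form of a simple Lie algebra is known to be nondegenerate, which is a theorem in characteristic zero but false in positive characteristic (e.g.\ $\mathfrak{psl}_p$ in characteristic $p$ has identically vanishing Killing form, as do many simple Lie algebras of Cartan type). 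Since the paper asserts the lemma over an arbitrary field and uses it in that generality (Proposition \ref{p3}), the case you leave open carries the real content. Your two fallback suggestions do not close it: ``the structure of the centralizer'' is not developed into an argument, and one cannot ``reduce to the nondegenerate case via the classification,'' both because the classification is available only over algebraically closed fields of characteristic $p>3$ and because the algebras with degenerate Killing form are genuinely present --- they would have to be checked one by one, not reduced away.

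The paper avoids the trace form entirely and argues uniformly in all characteristics: since $\g$ is centerless and $\mathrm{rk}(\ad x)=1$, one writes $\g=\Ker(\ad x)\oplus\F y$, views $\g$ as a simple module over $U(\g)$ via the adjoint action, notes that $\Ker(\ad x)$ annihilates $x$, and applies the PBW theorem to get $I_{\g}\cdot x=\sum_{i\geq1}\F(\ad y)^i(x)\subseteq\im(\ad y)$; since $I_{\g}\cdot x=\g$ by simplicity, $\ad y$ would be surjective, hence bijective, contradicting $[y,y]=0$. If you wish to keep your case analysis, you need a characteristic-free replacement of this kind for the $\lambda=0$ subcase; as written, your proof is complete only over fields where the Killing form of a simple Lie algebra is nondegenerate, in particular in characteristic zero.
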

\begin{proof}
Let $x$ be a nonzero vector such that 
$\mathrm{rk}(\ad x)\leq1$, and as $\mathfrak{g}$ is centerless, then $\mathfrak{g}=\ker \ad x\oplus\F y$ for some $y\neq0$. The adjoint representation endows $\mathfrak{g}$ with the structure of a simple module for the universal enveloping algebra $\mathit{U}(\mathfrak{g})$. Letting $I_\mathfrak{g}$ be the augmentation ideal of $\mathit{U}(\mathfrak{g})$, we have $[y, x]\in I_\mathfrak{g}\cdot x\setminus\{0\}$, so that $I_\mathfrak{g}\cdot x =\mathfrak{g}$. On the other hand, the PBW-Theorem implies $I_\mathfrak{g}\cdot x=\sum_{i\geq1}y^i\mathit{U}(\ker \ad x)\cdot x=\sum_{i\geq1}\F (\ad y)^i(x)\subset(\ad y)(\mathfrak{g})$, a contradiction.
\end{proof}

We can go further and provide a new simplicity criterion for the category of Lie algebras, of arbitrary dimension, which generalizes the above property in Lemma \ref{l5}.

\begin{thm}[Simplicity Criterion]\label{t5} Let $\g$ be a Lie algebra of arbitrary dimension over an arbitrary field. Then, $\g$ is simple if and only if for any nontrivial subspace $S$, either $[S,\g]=\g$ or $[S,\g]\varsubsetneq[[S,\g],\g]$.
\end{thm}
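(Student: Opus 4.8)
The plan is to prove the two implications separately, leaning on two standard consequences of simplicity for a Lie algebra $\g$: it is \emph{perfect} (that is, $[\g,\g]=\g$, since $[\g,\g]$ is a nonzero ideal) and \emph{centerless} (its center, being a proper ideal of a non-abelian simple algebra, vanishes). Throughout I abbreviate $T:=[S,\g]$, so that $[[S,\g],\g]=[T,\g]$ and the asserted dichotomy reads ``$T=\g$ or $T\subsetneq[T,\g]$''. I tacitly assume $\g\neq0$ so that nontrivial subspaces exist.

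For the forward direction, I assume $\g$ is simple and fix a nonzero subspace $S$. First I would note $T=[S,\g]\neq0$: otherwise every element of $S$ would centralize $\g$, placing $S$ in the center, which is $0$. The crucial step is the inclusion $T\subseteq[T,\g]$, obtained from perfectness and the Jacobi identity: since $[\g,\g]=\g$, the space $[S,\g]=[S,[\g,\g]]$ is spanned by elements $[s,[y,z]]$, and the identity $[s,[y,z]]=[[s,y],z]+[y,[s,z]]$ exhibits each such generator as a member of $[[S,\g],\g]=[T,\g]$. With $T\subseteq[T,\g]$ secured, exactly one of two situations arises. If the inclusion is strict, then $T\subsetneq[T,\g]$ and the second alternative holds. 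If instead $T=[T,\g]$, then $[T,\g]\subseteq T$, so $T$ is a nonzero ideal; simplicity forces $T=\g$, the first alternative. Either way the property holds at $S$.

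For the converse I would argue by contraposition: assuming $\g$ is not simple, I exhibit a nontrivial $S$ at which both alternatives fail. If $\g$ is abelian, any nonzero $S$ works, since then $[S,\g]=0\neq\g$ and $[[S,\g],\g]=0=[S,\g]$, so neither $T=\g$ nor $T\subsetneq[T,\g]$ can hold. Otherwise $\g$ admits a proper nonzero ideal $I$, and I would take $S=I$: from $[I,\g]\subseteq I\subsetneq\g$ I get $T=[I,\g]\neq\g$, and applying $[-,\g]$ to $[I,\g]\subseteq I$ yields $[[I,\g],\g]\subseteq[I,\g]$, i.e.\ $[T,\g]\subseteq T$, which rules out $T\subsetneq[T,\g]$. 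Hence the dichotomy fails at $S=I$, completing the contrapositive.

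I expect the only genuine subtlety to be the key inclusion $[S,\g]\subseteq[[S,\g],\g]$, which is precisely where perfectness is used and which prevents the \emph{a priori} possibility that $T$ and $[T,\g]$ are incomparable; once it is in place the condition collapses to the clean dichotomy ``$T=[T,\g]$ (forcing $T$ to be a nonzero ideal, hence $\g$) versus $T\subsetneq[T,\g]$'', after which the bookkeeping around the strict inclusion is routine. A minor point worth flagging is the degenerate case $\g=0$, which I would exclude by convention so that nontrivial subspaces are available in the converse.
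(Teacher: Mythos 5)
Your proof is correct and follows essentially the same route as the paper's: perfectness plus the Jacobi identity give $[S,\g]\subseteq[[S,\g],\g]$, equality then forces $[S,\g]$ to be a nonzero ideal (hence $\g$), and the converse tests the dichotomy on a nontrivial ideal $I$, where $[I,\g]\subseteq I$ kills both alternatives. You are in fact slightly more careful than the paper, which leaves implicit both the use of centerlessness to guarantee $[S,\g]\neq0$ and the abelian case in the converse.
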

\begin{proof}
Let $S$ be a subspace of $\g$, since $\g$ is perfect and by the Jacobi identity one has $[S,\g]\subset[[S,\g],\g]$. Hence the equality holds either if $S=0$ or $[S,\g]=\g$. Conversely, for a nontrivial ideal $I$, if $[I,\g]=\g$ then $I=\g$, otherwise $[I,\g]\varsubsetneq[[I,\g],\g]\subset [I,\g]$ which is absurd.
\end{proof}

The following  proposition was inspired by the almost classical Lie algebra $\mathfrak{pgl}(n)$.

\begin{prop}\label{p3}
Let $\s$ be a simple Lie algebra over any arbitrary field $\F$. The extended anticommutative algebra $A=\F d\ltimes \s$, where $[d,x]=x$ with $x$ is a nonzero vector of $\s$, is semisimple and has a unique proper nontrivial ideal $\s$ which is a simple ideal of codimension $1$.
\end{prop}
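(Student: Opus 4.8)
The plan is to describe $A$ explicitly, compute its entire ideal lattice by a short case analysis on whether an ideal is transverse to the codimension-one piece $\s$, and then read off semisimplicity from that lattice. Concretely, I take $A=\F d\oplus\s$ as vector spaces, with $\s$ carrying its simple Lie bracket and $d$ acting as the identity, i.e. $[d,y]=y$ for every $y\in\s$ (so $A$ is genuinely anticommutative but \emph{not} Lie, its Hom-Jacobian being nonzero). First I would record the easy half: since $[\s,\s]\subseteq\s$ and $[d,\s]=\s$, the subspace $\s$ is an ideal of codimension $1$; and any ideal $J$ of $A$ with $J\subseteq\s$ satisfies $[J,\s]\subseteq J$, hence is an ideal of the simple Lie algebra $\s$ and therefore equals $0$ or $\s$. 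In particular $\s$ is a simple ideal, which settles everything except the uniqueness of the transverse-free case.

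Next I would rule out every other ideal. Suppose $J$ is a proper nontrivial ideal with $J\not\subseteq\s$. Its intersection $J\cap\s$ is an ideal of $A$ contained in $\s$, hence $0$ or $\s$ by the previous step; the value $\s$ would give $\s\subseteq J$ together with an element of $J$ of nonzero $d$-component, forcing $J=A$ and contradicting properness. Thus $J\cap\s=0$, and since $\s$ has codimension $1$ this makes $\dim J=1$, say $J=\F(d+s)$ with $s\in\s$. Applying the ideal condition to $[d+s,d]=-s$ then forces $-s\in J\cap\s=0$, so $s=0$ and $J=\F d$; but then $[d,y]=y$ would have to lie in $\F d$ for every $y\in\s$, which fails for any $y\neq0$. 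Hence no ideal transverse to $\s$ exists, and $\s$ is the unique proper nontrivial ideal.

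The transverse case is the only step carrying real content, so that is where I expect the obstacle to sit. Conceptually it says that an ideal transverse to $\s$ would make $d$ act on $\s$ as the scalar $-1$ through the inner derivation $\ad s$: applying the ideal condition instead to $[d+s,y]=(\Id+\ad s)(y)\in\s$ and using $J\cap\s=0$ yields $\ad s=-\Id$ on $\s$, which is already impossible since evaluating at $y=s$ gives $0=[s,s]=-s$, whence $s=0$ and $\ad s=0\neq-\Id$. I would stress that a shortcut via invertibility of $\Id+\ad s$ is \emph{not} available, because $\ad s$ may well carry the eigenvalue $-1$; the safe argument is the explicit bracket computation, which is char-independent.

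Finally, semisimplicity follows from the ideal lattice just obtained. As $\s$ is perfect, $[A,A]=[d,\s]+[\s,\s]=\s$, so the derived series of $A$ stabilizes at $\s\neq0$ and $A$ is not solvable; and $\s$, being simple, is not solvable either. Since $0$, $\s$, $A$ are the only ideals and neither nonzero one is solvable, the maximal solvable ideal of $A$ is $0$, i.e. $A$ is semisimple. I would close by remarking that $A$ is then a semisimple anticommutative algebra which is \emph{not} a direct sum of simple ideals (its codimension-one complement admits no simple structure), exactly illustrating the phenomenon announced at the opening of this section.
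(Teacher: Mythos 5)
Your overall strategy --- split the ideals according to whether they lie inside $\s$ or are transverse to it, kill the transverse case, and read off semisimplicity from the resulting ideal lattice $\{0,\s,A\}$ --- is exactly the paper's. The divergence is in which algebra you analyse. You take $[d,y]=y$ for \emph{every} $y\in\s$, but the proposition (read together with the paper's proof and with the way the subsequent proposition on $S_n$, defined by $[d,e_1]=e_2$, reuses it) means a \emph{rank-one} action: $[d,x]=x$ for one fixed nonzero $x\in\s$, with $[d,\cdot]$ vanishing on a complement of $\F x$, so that $\dim[d,\s]=1$. This is precisely why the paper leans on Lemma \ref{l5} (that $\mathrm{rk}(\ad y)\geq 2$ for every nonzero $y$ in a simple Lie algebra): for a transverse ideal $I=\Span\{d+y\}$ one has $[d+y,\s]\subseteq I\cap\s=0$, whereas $\dim[d+y,\s]\geq \mathrm{rk}(\ad y)-\dim[d,\s]\geq 2-1=1$. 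Under the intended reading your pivotal computation breaks down: $[d+s,d]=-[d,s]$ is no longer $-s$ but only (up to sign) the $\F x$-component of $s$, so you conclude merely that $s$ has no $x$-component; likewise your fallback identity ``$\ad s=-\Id$ on $\s$'' becomes ``$\ad s=-[d,\cdot]|_{\s}$ has rank at most one,'' and ruling \emph{that} out is exactly where Lemma \ref{l5} is indispensable rather than optional.

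For the algebra you actually wrote down, your argument is correct and clean --- $[d+s,d]=-s\in J\cap\s=0$ is a slick way to dispose of the transverse case when $d$ acts as the identity --- and your treatment of the ideals contained in $\s$, as well as of semisimplicity via $[A,A]=\s$, carries over verbatim to the rank-one version. To repair the proof you only need to replace the transverse step: from $[d+s,\s]\subseteq J\cap\s=0$ deduce that $\ad s=-[d,\cdot]|_{\s}$ is a map of rank at most $1$, hence $s=0$ by Lemma \ref{l5}; then $J=\F d$ forces $x=[d,x]\in J\cap\s=0$, contradicting $x\neq 0$.
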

\begin{proof}
$A=\F d\ltimes \s$ with $[d,x]=x$, and let $I$ be a proper nontrivial ideal other than $\s$, so $[I,A]\subset I\cap \s$. So if the intersection is not trivial then, by Theorem \ref{t5}, $\s\subset I$ absurd. Hence, $I=\Span\{d+y\}$ for some nonzero $y\in\s$. Thanks to Lemma \ref{l5}, $\dim[d+y,\s]\geq1$ which again absurd. Therefore $\s$ is the unique proper nontrivial ideal of $A$.
\end{proof}

\begin{rem}\label{r1}
    In general, a semisimple anticommutative algebra needs not to be a direct sum of simple anticommutative algebras over any arbitrary field. The same holds for semisimple Hom-Lie algebras.
\end{rem}

The following proposition provides a family of nonsimple anticommutative algebras which can have the structure of a simple Hom-Lie algebra.

\begin{prop}
  Let $S_n$ be the anticommutative algebra defined by the following relations, with respect to the basis $\{e_1,\cdots,e_n\},~n\ge3$
  $$\left\{\begin{array}{ll}
           ~[e_{i},e_{i+1}]=e_{i+2},~~1\leq i\leq n-2,\\
          ~[e_{n-1},e_{n}]=e_{1}~\text{and }[e_n,e_{1}]=e_{2}
    \end{array}
    \right.
    $$ 
 Then the extension $A_{n+1}=\F d\ltimes S_n$, given by $[d,e_{1}]=e_{2}$,  is a simple Hom-Lie algebra with respect to the twisting map $\sigma$ defined by $\sigma(d)=\sigma(e_i)=0,~1\leq i\leq n-1$ and $\sigma(e_n)=d.$
\end{prop}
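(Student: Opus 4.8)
The plan is to split the verification into two stages: first confirm that $(A_{n+1},\brk,\sigma)$ satisfies the Hom-Jacobi identity, and then prove simplicity by showing that every nonzero Hom-ideal equals the whole algebra. Throughout I would index the generators of $S_n$ cyclically, so that $[e_i,e_{i+1}]=e_{i+2}$ with subscripts read modulo $n$, the only extra bracket being $[d,e_1]=e_2$ and all remaining brackets of basis vectors vanishing; non-abelianity is then immediate from $[e_1,e_2]=e_3\ne 0$.

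For the Hom-Jacobi identity the decisive remark is that $\sigma$ is supported on the single vector $e_n$, with $\sigma(e_n)=d$ and $\sigma$ vanishing on $d,e_1,\dots,e_{n-1}$. Since $J_{\sigma,\brk}$ is alternating, I would evaluate it only on distinct basis triples, and such a triple contributes nothing unless one entry is $e_n$. For a triple $\{e_n,y,z\}$ with $y,z\in\{d,e_1,\dots,e_{n-1}\}$ the two terms carrying $\sigma(y),\sigma(z)$ die and one is left with $J_{\sigma,\brk}(e_n,y,z)=[\sigma(e_n),[y,z]]=[d,[y,z]]$. Now $[d,w]$ is the $e_1$-coefficient of $w$ times $e_2$, whereas the only basis bracket producing an $e_1$-component is $[e_{n-1},e_n]$, which involves $e_n$ and is excluded here. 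Hence $[y,z]$ has no $e_1$-component, $[d,[y,z]]=0$, and the Hom-Jacobi identity holds.

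For simplicity, let $I$ be a nonzero Hom-ideal. I would first isolate two bootstrap reductions. If $d\in I$, then $[d,e_1]=e_2\in I$, and running $[e_{j-1},e_j]=e_{j+1}$ around the cycle yields $e_2,e_3,\dots,e_n$ and finally $e_1=[e_{n-1},e_n]$, so $I=A_{n+1}$. Likewise, if some $e_j\in I$, the cyclically symmetric relations of $S_n$ generate all of $S_n\subset I$; in particular $e_n\in I$, so $\sigma(e_n)=d\in I$ and the first reduction applies. It therefore suffices to show that $I$ contains either $d$ or some $e_j$.

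The heart of the matter, and the step I expect to be the main obstacle, is this extraction: from an arbitrary nonzero $v=\lambda d+\sum_{i=1}^n\mu_i e_i\in I$ one must manufacture $d$ or a basis vector using only $\sigma$ and the brackets $[e_i,\cdot]$. The mechanism is a raising procedure. If the top $e$-index of $v$ (the largest $k$ with $\mu_k\ne 0$) satisfies $k<n$, then $[e_{k-1},v]$, or $[e_n,v]$ when $k=1$, again lies in $I$ and has top $e$-index exactly $k+1$, because $\ad(e_{k-1})$ sends $e_k\mapsto e_{k+1}$ while moving every other surviving term to strictly lower index. Iterating raises the top index to $n$, giving $w\in I$ with nonzero $e_n$-component, whence $\sigma(w)$ is a nonzero multiple of $d$ and $d\in I$; the leftover case $v\in\F d$ gives $d\in I$ at once. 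The delicate points are the interaction of $\ad(e_{k-1})$ with the $\lambda d$ term (which occurs only at $k=2$, through $[e_1,d]=-e_2$, and merely adds a lower-order contribution) and the cyclic wraparound invoked at $k=1$; these become most cramped in the smallest case $n=3$, where $e_{n-1}=e_2$ and the relation $[e_1,d]=-e_2$ collides with structural brackets, so that case deserves an explicit check.
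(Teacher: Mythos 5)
Your proof is correct. The Hom-Jacobi verification is essentially the paper's own: both of you observe that $\sigma$ is supported on $e_n$ alone, so the Hom-Jacobian survives only on triples containing $e_n$, where it collapses to $[d,[x,y]]$ with $x,y\in\{d,e_1,\dots,e_{n-1}\}$, and this vanishes because the only structure constant producing an $e_1$-component is $[e_{n-1},e_n]=e_1$. The simplicity argument, however, is genuinely different. The paper works at the level of the untwisted algebra: it asserts that $S_n$ is simple, invokes the mechanism of Proposition \ref{p3} (via the rank condition $\dim[x,S_n]\geq2$) to conclude that $S_n$ is the \emph{unique} proper nontrivial ideal of $A_{n+1}$, and then notes that $\sigma(e_n)=d\notin S_n$, so that ideal is not a Hom-ideal. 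You instead work directly inside an arbitrary nonzero Hom-ideal $I$: the raising step $v\mapsto[e_{k-1},v]$ (with the $k=1$ wraparound $[e_n,v]=\mu_1e_2$ and the $k=2$ interaction $[e_1,d]=-e_2$ correctly identified as lower-order) pushes the top index to $n$, $\sigma$ then extracts $d$, and the cyclic relations bootstrap $d$ to all of $A_{n+1}$. What the paper's route buys is the complete ideal lattice of $A_{n+1}$ and a template reusable for other extensions $\F d\ltimes\s$; what yours buys is self-containedness — you never need simplicity of $S_n$ as an algebra, only the weaker fact that each $e_j$ generates $S_n$ as an ideal, and you avoid leaning on Lemma \ref{l5} and Theorem \ref{t5}, which are stated for Lie algebras even though $S_n$ fails the Jacobi identity for $n\geq4$. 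Using $\sigma$ in the middle of the extraction, rather than only to rule out $S_n$ at the end, is the main structural difference.
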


\begin{proof}
  Clearly $S_n$ is simple, and thanks to the proof of Proposition \ref{p3}, if for any $x\in S_n\setminus\{0\}$, $\dim[x,S_n]\geq2$ then the extension $A_{n+1}$ has $S_n$ as a unique proper nontrivial ideal, which is the case here. It follows from the construction of $\sigma$ that it does not leave stable $S_n$. Therefore it suffices to check that $\sigma$ is a twisting map. We observe that for any $x,y\in\{e_1,\cdots,e_{n-1}\}\cup\{d\}$, $[x,y]\in \mathcal{C}_{A_{n+1}}(d):=\{z\in A_{n+1}~|~ [z,d]=0\}$, so that
  $\cycl_{e_n,x,y}[\sigma(e_n),[x,y]]=[d,[x,y]]=0$. Hence, $\sigma\in\HS(A_{n+1})$.
\end{proof}

\begin{exs}
\begin{enumerate}
    \item The Heisenberg Lie algebra $\mathfrak{h}_{2n+1}$ generated by the elements $e_i,~1\leq i\leq 2n+1$, satisfying the relations :
    $$[e_{2i-1},e_{2i}]=e_{2n+1},\quad 1\leq i\leq n$$
    is a $2$-step nilpotent Lie algebra that can be endowed with the structure of a simple Hom-Lie algebra. For that, it suffices to take $$\sigma^i(e_{2n+1})=e_{2n+1-i},~1\leq i\leq 2n$$ 
    Indeed, any linear map $\sigma$ hence defined is a twisting map, since every linear map of a metabelian Lie algebra is a twisting map. 
    Then as a consequence of Engel's Theorem (cf. \cite{Sel67}), every ideal intersects nontrivially the center and so the vector $e_{2n+1}$ is included in every nontrivial ideal. Therefore, by the construction of $\sigma$, a nontrivial Hom-ideal must be equal to the whole Lie algebra. 
    \item Let $R_n$ be the anticommutative algebra of dimension $2n\geq4$, defined by : 
$$\left\{\begin{array}{ll}
           ~[e_{2i-1},e_{2i}]=e_{2i+1},~~1\leq i\leq n-1\\
          ~[e_{2n-1},e_{2n}]=e_{1}
    \end{array}
    \right.$$ 
Clearly $R_n$ is solvable and a straightforward verification shows that the linear map $\sigma$ given by $\sigma(e_{2i-1})=e_{2i+2},~1\leq i\leq n-1$, $\sigma(e_{2n-1})=e_{2}$ and $\sigma^2=0$, is a twisting map. Finally, for a nontrivial Hom-ideal $I$, the brackets' form implies that $0\neq[I,R_n]\subset I\cap[R_n,R_n]$, and so at least there exists a vector $e_j\in I,\,j\equiv 1~[2]$. Then by the $\sigma$-invariance of $I$ we must have $I=R_n$. Hence, $R_n$ is a simple Hom-Lie algebra.
    \item The direct sum of the affine Lie algebra $\mathfrak{aff}(\F):[x,y]=y,$ with any abelian algebra $\mathfrak{a}_n$, can be endowed with the structure of simple Hom-Lie algebra. Indeed, it suffices with respect to a basis $\{e_1,\cdots,e_n\}$ of $\mathfrak{a}_n$, to define the desired twisting map $\sigma$ by $\sigma(y)=x,~\sigma(x)=e_1,~\sigma(e_n)=y$ and $\sigma(e_i)=e_{i+1},\,1\leq i\leq n-1$. Any nontrivial Hom-ideal, by construction, must contains $y$ and so the whole direct sum.
\end{enumerate}

\end{exs}

\begin{rem}
\begin{enumerate}
    \item     The above examples show that a classification of simple Hom-Lie algebras is not possible without additional restrictions on the twisting map or the multiplication law. 
    \item The twisting map of a simple Hom-Lie algebra needs not to be bijective.
\end{enumerate}
\end{rem}

\begin{defn}
A {\bf strongly simple Hom-Lie algebra} is a nonabelian anticommutative algebra where every twisting map endows it with a simple Hom-Lie structure.
\end{defn}

\begin{thm}\label{t4}
An anticommutative algebra is a strongly simple Hom-Lie algebra if and only if it is simple.
\end{thm}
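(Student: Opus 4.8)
The plan is to reduce both implications to a single observation: the zero map is always a twisting map, and with respect to it the Hom-ideals coincide \emph{exactly} with the ordinary ideals of the anticommutative algebra. Everything else is then formal.

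First I would verify that $0\in\HS(A)$ for every anticommutative algebra $A$. Indeed the Hom-Jacobian satisfies $J_{0,\brk}=\cycl_{x,y,z}[0(x),[y,z]]=0$ identically, so the Hom-Jacobi identity holds trivially and the zero map is a legitimate twisting map regardless of whether $\brk$ obeys the Jacobi identity. Next, for $\sigma=0$ the stability condition $\sigma(I)\subset I$ reads $0\subset I$, which is automatic; hence a subspace $I$ is a Hom-ideal of $(A,\brk,0)$ if and only if $[I,A]\subset I$, that is, if and only if $I$ is an ideal of $(A,\brk)$ in the usual sense. Consequently $(A,\brk,0)$ is a simple Hom-Lie algebra precisely when $A$ is nonabelian and has no proper nontrivial ideal, i.e. precisely when $A$ is a simple anticommutative algebra.

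With this equivalence in hand, the two directions are immediate. For the forward implication, if $A$ is strongly simple then by definition every twisting map endows it with a simple Hom-Lie structure; applying this to $\sigma=0$ and invoking the equivalence just established shows that $A$ is simple as an anticommutative algebra. For the converse, suppose $A$ is simple. Then $A$ is nonabelian, and since any Hom-ideal with respect to an \emph{arbitrary} twisting map $\sigma$ is in particular an ordinary ideal, simplicity forces every Hom-ideal to be trivial. Thus $(A,\brk,\sigma)$ is a simple Hom-Lie algebra for every $\sigma\in\HS(A)$, and $A$ is strongly simple.

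The only conceptual point — the place where a naive attempt would stumble — is the choice of twisting map in the forward direction. One is tempted to test simplicity against $\sigma=\Id$, but the identity lies in $\HS(A)$ only when $A$ already satisfies the Jacobi identity, i.e. when $A$ is a Lie algebra, which is not assumed. Replacing $\Id$ by the zero map circumvents this entirely: $0$ belongs to $\HS(A)$ unconditionally while still detecting all ordinary ideals, so it is exactly the twisting map that bridges Hom-simplicity and anticommutative simplicity.
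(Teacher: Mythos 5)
Your proof is correct and uses exactly the same key idea as the paper: testing strong simplicity against the zero twisting map, under which Hom-ideals coincide with ordinary ideals. You merely spell out the routine verifications (that $0\in\HS(A)$ and that the converse is trivial because every Hom-ideal is an ideal) which the paper leaves implicit.
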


\begin{proof}
For the zero twisting map, a strongly simple Hom-Lie algebra is just an anticommutative algebra that has no nontrivial proper ideal, and so it is simple. The converse is trivial.
\end{proof}

We shall denote by $\SSH$, $\MS$ and $\RS$, respectively, the class of strongly simple Hom-Lie algebras, the class of anticommutative algebras that have at least a multiplicative simple Hom-Lie structure, and the class of anticommutative algebras that have at least a regular simple Hom-Lie structure.
\begin{thm}
$\SSH=\MS$.
\end{thm}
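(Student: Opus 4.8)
The plan is to prove the two inclusions separately, using Theorem \ref{t4}, which identifies $\SSH$ with the class of simple anticommutative algebras, together with the structural results of Section 4. For the easy inclusion $\SSH\subseteq\MS$, let $A\in\SSH$; by Theorem \ref{t4} the algebra $(A,\brk)$ is simple anticommutative, in particular nonabelian. I would then equip it with the zero twisting map: $\sigma=0$ trivially satisfies the Hom-Jacobi identity and the multiplicativity condition $\sigma\circ\brk=\brk\circ\sigma^{\otimes^2}$, and with $\sigma=0$ every ideal $I$ satisfies $\sigma(I)=0\subseteq I$, so Hom-ideals coincide with ordinary ideals. Hence simplicity of $(A,\brk)$ makes $(A,\brk,0)$ a multiplicative simple Hom-Lie algebra, i.e. $A\in\MS$.

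For $\MS\subseteq\SSH$, let $A\in\MS$ carry a multiplicative simple Hom-Lie structure $(A,\brk,\sigma)$. By Corollary \ref{c-1}, either $\sigma=0$ or $(A,\brk,\sigma)$ is regular. If $\sigma=0$, then again Hom-ideals are ordinary ideals, so Hom-simplicity gives that $(A,\brk)$ is simple anticommutative and $A\in\SSH$ by Theorem \ref{t4}. The whole difficulty is thus concentrated in the regular case, where I must upgrade Hom-simplicity to genuine simplicity of $(A,\brk)$, so that Theorem \ref{t4} applies. By Theorem \ref{t2} I may write $A=\bigoplus_{k=1}^n\s_k$ with $\s_k\simeq\s$ a simple Lie algebra, $\brk=\sigma\circ\oplus_k[\cdot,\cdot]_k$ and $\sigma=\oplus_k\sigma_{k\rho(k)}$ for a cyclic permutation $\rho$. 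Reindexing cyclically, this means $[\s_k,\s_k]\subseteq\s_{k+1}$ (obtained by applying the Lie isomorphism $\sigma_{k,k+1}\colon\s_k\to\s_{k+1}$ to the bracket of $\s_k$), while $[\s_k,\s_j]=0$ for $k\neq j$. The plan is to show that the ideal $J$ of $(A,\brk)$ generated by any nonzero vector equals $A$. Since for nonzero $v$ with nonzero component $v_k\in\s_k$ one has $[v,\s_k]=\sigma_{k,k+1}(\im\ad v_k)\neq 0$ in $\s_{k+1}$ (as $\s$ is centerless, cf. Lemma \ref{l5}), $J$ contains a nonzero homogeneous vector, so it suffices to treat $v\in\s_0$ after reindexing.

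The core step transports the cyclic bracketing back into the single Lie algebra $\s_0$. Bracketing $n$ times carries a subspace of $\s_0$ once around the cycle and returns it to $\s_0$ twisted by the automorphism $\tau:=\sigma^n\vert_{\s_0}\in\Aut(\s_0)$, while each single bracketing replaces $X\subseteq\s_0$ by $[X,\s_0]_0$ up to the fixed isomorphisms. Writing $\Delta_0=\F v$ and $\Delta_{m+1}=[\Delta_m,\s_0]_0$, I would first observe, via the Jacobi identity and $[\s_0,\s_0]_0=\s_0$, that $\Delta_m\subseteq\Delta_{m+1}$ for $m\geq1$, so that $(\Delta_m)_{m\geq1}$ is an increasing chain with union $\sum_{m\geq1}\Delta_m=[\sum_{m\geq0}\Delta_m,\s_0]_0=[\s_0,\s_0]_0=\s_0$, the penultimate equality holding because $\sum_{m\geq0}\Delta_m$ is the Lie ideal of $\s_0$ generated by $v$, hence all of $\s_0$ by simplicity. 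Finite dimensionality then gives an index $M$ with $\Delta_m=\s_0$ for all $m\geq M$. Consequently the $\s_0$-component of $J$ contains $\tau^{j}(\Delta_{jn})=\tau^{j}(\s_0)=\s_0$ as soon as $jn\geq M$, and the same holds for every component, whence $J=A$. Therefore any nonzero ideal of $(A,\brk)$ contains the ideal it generates, namely $A$, so $(A,\brk)$ is simple anticommutative and $A\in\SSH$ by Theorem \ref{t4}.

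I expect the only real difficulty to lie in the regular case, and within it in the bookkeeping of the cyclic transport together with the proof that the iterated bracket $\Delta_m$ eventually exhausts $\s_0$. The crucial and slightly non-obvious point is the monotonicity $\Delta_m\subseteq\Delta_{m+1}$ for $m\geq1$, which follows from the Jacobi identity and perfectness of $\s$; it is precisely what compensates for the fact that the \emph{shift} structure $[\s_k,\s_k]\subseteq\s_{k+1}$ prevents any accumulation inside a single summand and forces one to traverse the whole cycle before the generated ideal can close up. Once this is in place, the automorphism twist $\tau$ is harmless, since $\s_0$ is $\tau$-invariant, and the remaining verifications are routine.
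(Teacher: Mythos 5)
Your proof is correct and follows essentially the same route as the paper: reduce to the regular case via Lemma \ref{l41}, invoke the structure theorem (Theorem \ref{t2}), and show that iterated bracketing of any nonzero element eventually fills one simple component and then all of $A$, the key monotonicity $\Delta_m\subseteq\Delta_{m+1}$ being exactly the perfectness-plus-Jacobi observation that underlies the simplicity criterion (Theorem \ref{t5}), which the paper cites at the corresponding step. If anything, your write-up is slightly more careful, since you treat arbitrary ideals of $(A,\brk)$ rather than Hom-ideals and make the cyclic transport between the summands explicit, which is what the application of Theorem \ref{t4} actually requires.
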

\begin{proof}
Every algebra in $\SSH$ is multiplicative simple with respect to the zero map. On the other hand, by Lemma \ref{l41}, the twisting map of a multiplicative simple Hom-Lie algebra is either zero or regular. 
So, if $(A,\brk,\sigma)$ is a regular simple Hom-Lie algebra then, by the Structure Theorem \ref{t2}, $A=\bigoplus_{i=1}^n\s_i$ and for any nontrivial Hom-ideal $I$ there is $1\leq i\leq n$ such that $I_1=[I,\s_i]_{\sigma^{-1}}\neq0$, and let $I_{k+1}=[I_k,\s_i]_{\sigma^{-1}},~k\geq1$. So thanks to the new simplicity criterion, cf. Theorem \ref{t5}, there exists $k_0$ such that $I_{k_0}=\s_i\subset\sigma^{-k_0}(I)$, and since $\sigma$ permutes cyclically the simple components of $(A,\brk_{\sigma^{-1}})$ and by the $\sigma$-invariance of $I$, we get $I=A$. Hence the equality.
\end{proof}
\begin{cor}
  A multiplicative Hom-Lie algebra $(A,\left[\cdot,\cdot\right],\sigma)$ is simple, if it is not abelian and has no proper ideal.
\end{cor}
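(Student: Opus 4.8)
The plan is to read the statement as the elementary half of the slogan $\SSH=\MS$ specialized to a single fixed twisting map. First I would record the purely set-theoretic observation that underlies everything: a Hom-ideal of $(A,\brk,\sigma)$ is, by the definition of a Hom-ideal, an ideal of the underlying anticommutative algebra $(A,\brk)$ that happens in addition to be $\sigma$-invariant. Consequently every proper Hom-ideal is in particular a proper ideal, so the hypothesis that $A$ has no proper ideal forbids any proper Hom-ideal. Combined with the standing assumption that $A$ is not abelian, this is verbatim the definition of a simple Hom-Lie algebra, and the conclusion follows at once.

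I would then connect this to the preceding results to explain why it deserves to be stated separately. By Theorem \ref{t4}, a non-abelian anticommutative algebra with no proper ideal is simple, hence lies in $\SSH$; by the very definition of strong simplicity this means that every twisting map endows $A$ with a simple Hom-Lie structure. In particular the given $\sigma$ does so, being a twisting map since $(A,\brk,\sigma)$ is assumed to be a Hom-Lie algebra and thus $\sigma\in\HS(A)$. Thus the corollary is the translation, at the level of a single structure, of the inclusion $\SSH\subseteq\MS$, whereas the genuinely nontrivial reverse inclusion $\MS\subseteq\SSH$ is precisely what rules out proper (not necessarily $\sigma$-invariant) ideals in a simple multiplicative Hom-Lie algebra, via the simplicity criterion of Theorem \ref{t5}.

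There is no computational obstacle to overcome; the only point requiring care is the logical direction. The forward implication asserted by the corollary uses nothing beyond the inclusion of Hom-ideals into ideals and does not even invoke multiplicativity. The multiplicativity hypothesis is recorded here only so that the statement pairs cleanly with its converse—furnished by the equality $\SSH=\MS$—into the assertion that, within the multiplicative category, Hom-Lie simplicity and anticommutative simplicity coincide. Accordingly I would keep the proof to the one-line inclusion argument, and, if a reader wishes to see the wider picture, append a remark situating it against Theorem \ref{t4} and the equality $\SSH=\MS$ so that both directions of the coincidence are visible together.
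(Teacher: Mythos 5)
Your proof is correct and matches the paper's (implicit) reasoning: the paper states this corollary without proof as an immediate consequence of the equality $\SSH=\MS$, and you have supplied exactly the two observations that make it work. You are also right to flag the directional subtlety: read literally, the corollary is the trivial half (a Hom-ideal is in particular an ideal, so no proper ideal plus non-abelian gives simplicity straight from the definition, without using multiplicativity), while the substantive content the author intends --- that in the multiplicative category Hom-Lie simplicity conversely forces the absence of proper ideals, not merely proper Hom-ideals --- is precisely the inclusion $\MS\subseteq\SSH$ established in the preceding theorem via the simplicity criterion of Theorem \ref{t5}; your proposal correctly identifies and separates both halves.
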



\section{Strongly* and Pure Strongly Simple Hom-Lie algebras}
In some cases strongly simple Hom-Lie algebras may have $\HS=0$, as it will be encountered later in Examples \ref{e5}. So it is natural to introduce new subclasses that avoid this phenomena.

\begin{defn}
We call {\bf $\text{strongly}^*$ simple Hom-Lie algebra} (resp. {\bf pure strongly simple Hom-Lie algebra}) a strongly simple Hom-Lie algebra $A$ that has $\HS(A)\neq 0$ (resp. 
if $\HS(A)\cap\operatorname{GL}(A)\neq0$).
We note respectively their classes $\Ss$ and $\PS$. 
\end{defn}

\begin{rem}
Obviously, $\RS\subset\PS\subset\Ss\subset\SSH=\MS$.
\end{rem}

\begin{thm}\label{t63}
In dimension $3$, every simple anticommutative algebra is isomorphic to the outside Yau's twist of the simple Lie algebra $\so(3,\F)$ with respect to some bijective linear map.
In addition, $\RS_3\varsubsetneq\PS_3=\Ss_3=\SSH_3=\MS_3$.
\end{thm}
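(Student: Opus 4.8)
The plan is to prove the dimension-$3$ statement in two parts. First I would establish the structural claim that every simple anticommutative algebra $A$ of dimension $3$ is the outside Yau's twist $\theta\circ[\cdot,\cdot]_{\so}$ of $\so(3,\F)$ for some $\theta\in\operatorname{GL}(A)$. The natural starting point is to pick a nonzero vector and analyze the multiplication via the maps $\ad_x$. Since $A$ is anticommutative and $3$-dimensional, for each $x$ the map $\ad_x=[x,\cdot]$ kills $x$ and maps into a $2$-dimensional space, so $\operatorname{rk}(\ad_x)\le 2$; simplicity forces $[A,A]=A$ (as $[A,A]$ is an ideal and $A$ is non-abelian), hence the bracket is surjective as a map $A^{\wedge 2}\to A$. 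Because $\dim A^{\wedge 2}=3=\dim A$, the bracket induces a \emph{bijective} linear map $\mu:\bigwedge^2 A\isomto A$. Identifying $\bigwedge^2 A\cong A$ via a volume form (equivalently, via the cross-product structure that defines $\so(3,\F)$), this $\mu$ is exactly an invertible endomorphism $\theta$ postcomposed onto the standard cross product, i.e. $[\cdot,\cdot]^A=\theta\circ[\cdot,\cdot]_{\so(3,\F)}$. The key point to justify carefully is that \emph{any} simple $3$-dimensional anticommutative algebra arises this way: the surjectivity of the bracket plus the dimension count $\dim\bigwedge^2 A=\dim A$ is what makes $\theta$ invertible, and invertibility of $\theta$ is precisely what guarantees the twist is ``outside'' by a bijective linear map. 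I would also confirm that simplicity of $A$ is automatic for the cross-product bracket (no nontrivial proper ideals), which is a short direct check using that a $1$- or $2$-dimensional ideal would force a lower-rank $\ad$, contradicting invertibility of $\mu$.

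For the chain of (in)equalities I would argue as follows. The equalities $\Ss_3=\SSH_3=\MS_3$ already follow from the general results: $\SSH=\MS$ by the theorem preceding this one, and $\SSH_3=\Ss_3$ amounts to showing that every simple $3$-dimensional anticommutative algebra admits a \emph{nonzero} twisting map, i.e. $\HS(A)\ne 0$. For the strict inclusion $\RS_3\subsetneq\PS_3$, I would exhibit a simple anticommutative algebra of dimension $3$ that admits an invertible twisting map but no \emph{regular} simple Hom-Lie structure; the Yau-twist description gives a large supply of such algebras, most of which are not themselves Lie algebras and hence cannot carry a regular (automorphism) twisting map of the rigid form dictated by Theorem~\ref{t2}.

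The crux is the equality $\PS_3=\Ss_3$, equivalently showing that for a simple $3$-dimensional anticommutative algebra, $\HS(A)\ne 0$ already forces $\HS(A)\cap\operatorname{GL}(A)\ne 0$. Here I would compute $\HS(A)$ explicitly using the Yau-twist normal form. Writing $[\cdot,\cdot]^A=\theta\circ[\cdot,\cdot]_{\so}$, the Hom-Jacobi identity $J_{\sigma,[\cdot,\cdot]^A}=0$ becomes a linear condition on $\sigma$ that can be written in terms of $\theta$ and the structure constants of $\so(3,\F)$. Using that the Hom-Jacobian is alternating, $J_{\sigma,[\cdot,\cdot]^A}(e_1,e_2,e_3)$ is a single vector equation, so $\HS(A)$ is cut out inside $\End(A)\cong\F^{9}$ by only $3$ linear equations and is therefore at least $6$-dimensional. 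A space of endomorphisms of dimension $\ge 6$ inside the $9$-dimensional $\End(A)$ cannot be contained in the (hypersurface) set of singular matrices together with the zero map in any way that avoids $\operatorname{GL}(A)$; concretely I would show the determinant does not vanish identically on $\HS(A)$ by exhibiting one invertible element (for instance a suitable scalar multiple of $\theta^{-1}$ or the map built from the cross product itself). This makes $\HS(A)\cap\operatorname{GL}(A)\ne 0$, giving $\PS_3=\Ss_3$. The main obstacle I anticipate is handling this uniformly over an \emph{arbitrary} field $\F$, including characteristic $2$ and $3$ and non-algebraically-closed fields: I must verify the dimension count $\dim\HS(A)\ge 6$ and the nonvanishing of the determinant without appealing to eigenvalues or algebraic closure, relying instead on the explicit structure constants and a direct exhibition of one invertible twisting map.
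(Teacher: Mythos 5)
Your first half matches the paper's argument almost exactly: simplicity gives $[A,A]=A$, so the bracket is a surjection $\bigwedge^2A\to A$ between $3$-dimensional spaces, hence a bijection, and identifying $\bigwedge^2A\cong A$ via the cross product yields $\brk=\theta\circ\brk_{\so(3,\F)}$ with $\theta$ invertible; the paper's map $\sigma$, defined by $\sigma(e_1)=[e_2,e_3]$, $\sigma(e_2)=[e_3,e_1]$, $\sigma(e_3)=[e_1,e_2]$, \emph{is} your $\theta$. The gap is that you never nail down the one computation on which everything else rests: this $\theta$ is itself a twisting map, because $[\theta(e_1),[e_2,e_3]]+[\theta(e_2),[e_3,e_1]]+[\theta(e_3),[e_1,e_2]]=[\theta(e_1),\theta(e_1)]+[\theta(e_2),\theta(e_2)]+[\theta(e_3),\theta(e_3)]=0$ by anticommutativity alone. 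That single line gives $\HS(A)\cap\operatorname{GL}(A)\neq0$ for \emph{every} simple $3$-dimensional $A$ over \emph{every} field, hence $\PS_3=\Ss_3=\SSH_3$, and simultaneously exhibits $(A,\brk)$ as the outside Yau twist of $(A,\brk_{\theta^{-1}})\simeq\so(3,\F)$. Your proposed substitutes do not work: a scalar multiple of $\theta^{-1}$ is in general \emph{not} a twisting map (for $\theta(e_1)=e_1+e_2$, $\theta(e_2)=e_2$, $\theta(e_3)=e_3$ in characteristic $\neq2$ the Hom-Jacobian of $\theta^{-1}$ equals $\theta(2e_3)\neq0$), and the identity is one only when $A$ is already Lie. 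The supporting dimension count is also not a valid fallback: while $\dim\HS(A)\geq6$ is correct (three linear equations in $\End(A)\cong\F^9$), a $6$-dimensional subspace of $\End(\F^3)$ can consist entirely of singular maps (e.g.\ all matrices with vanishing first column), so dimension alone never forces a meeting with $\operatorname{GL}(A)$.

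The argument you sketch for $\RS_3\varsubsetneq\PS_3$ also fails as stated. Membership in $\RS_3$ does \emph{not} require $(A,\brk)$ to be a Lie algebra: by Theorem~\ref{t2} with $n=1$, the algebras in $\RS_3$ are precisely the twists $\eta\circ\brk_{\s}$ of a simple Lie bracket by an \emph{automorphism} $\eta$ of $\s$, and such a twist is generally not Lie (any nonscalar $\eta\in\Aut(\so(3,\C))$ already gives a non-Lie member of $\RS_3$). So ``not a Lie algebra, hence not regular'' does not separate the classes; what must be shown is that some $\theta\circ\brk_{\so(3,\F)}$ is not isomorphic to any automorphism-twist. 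The paper does this by taking $\theta$ fixing $e_1,e_2$ but not $e_3$ and checking $[\theta(e_1),\theta(e_2)]=\theta(e_3)\neq\theta^2(e_3)$, i.e.\ $\theta$ is not multiplicative. Your route, as written, would not yield the strict inclusion.
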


\begin{proof}
Let $\{e_1,e_2,e_3\}$ be a basis of the $3$-dimensional simple anticommutative algebra $(A,[\cdot,\cdot])$ and set $\sigma$ be defined by $\sigma(e_3)=[e_1,e_2],~\sigma(e_1)=[e_2,e_3]$ and $\sigma(e_2)=[e_3,e_1]$. So by simplicity and the number of the brackets, $\sigma$ is bijective. Then $\cycl_{e_1,e_2,e_3}[\sigma(e_1),[e_2,e_3]]=\cycl_{1\leq  i\leq 3}[\sigma(e_i),\sigma(e_i)]=0$. Therefore $\sigma$ is a bijective twisting map, and so $(A,[\cdot,\cdot],\sigma)$ is a pure strongly simple Hom-Lie algebra. In addition, we remark that $(A,[\cdot,\cdot]_{\sigma^{-1}})\simeq\so(3,\F)$, i.e. $(A,\brk)$ is the outside Yau's twist of the Lie algebra $\so(3,\F)$ with respect to the bijective linear map $\sigma$.
\\For $\RS_3\varsubsetneq\PS_3$, we consider the simple Hom-Lie algebra $(A,\brk,\sigma)$ defined, as above, by :
$$[e_1,e_2]=\sigma(e_3),\,[e_2,e_3]=\sigma(e_1),\,[e_3,e_1]=\sigma(e_2)$$ 
where $\det(\sigma)\neq0$. For example, for $\sigma(e_1)=e_1,~\sigma(e_2)=e_2$ and $\sigma(e_3)\neq e_3$, we have $[\sigma(e_1),\sigma(e_2)]=[e_1,e_2]=\sigma(e_3)\neq\sigma^2(e_3)$, hence $\sigma$ cannot be multiplicative. 
\end{proof}

\begin{exs}\label{e5}
\begin{enumerate}
    \item Let $A_1$ be the $4$-dimensional anticommutative algebra defined by :
    $$[e_1,e_2]=e_3,\,[e_1,e_3]=e_4,\,[e_1,e_4]=e_1,\,[e_2,e_4]=e_2$$
    It's easy to show that $A_1$ is simple. Also, we find that a twisting map $\sigma$ has the form $\sigma(e_1)=a_{21} e_2+a_{31} e_3+a_{41} e_4$, $\sigma(e_4)=a_{44} e_4$ and $\sigma(e_2)=\sigma(e_3)=0$. Thus, $\HS(A_1)\neq0$ and for all $\sigma\in \HS(A_1),~\mathrm{det}(\sigma)=0$. Therefore $\PS\varsubsetneq\Ss$. 
 \item Let $A_2$ be the $4$-dimensional anticommutative algebra 
defined by the same relations as $A_1$, to which we add this new entry $:$ $[e_2,e_3]=e_1$. Clearly, $A_3$ is simple and we find that $\HS(A_2)=0$. So $\Ss\varsubsetneq\SSH$.
\end{enumerate}
\end{exs}

\begin{cor}
  $~~\RS\varsubsetneq\PS\varsubsetneq\Ss\varsubsetneq\SSH=\MS$.
\end{cor}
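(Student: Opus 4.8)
The plan is to assemble the strict inclusions from the examples and structure results already in hand, since the non-strict chain $\RS\subset\PS\subset\Ss\subset\SSH=\MS$ is immediate: the first three inclusions follow directly from the definitions of the four classes, and the final equality is the content of the preceding theorem. It therefore remains only to exhibit, at each of the three stages, a witness lying in the larger class but not in the smaller one.

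First, for $\RS\varsubsetneq\PS$, I would invoke Theorem \ref{t63}. There the three-dimensional outside Yau's twist of $\so(3,\F)$ with a bijective but non-multiplicative twisting map (for instance $\sigma(e_1)=e_1$, $\sigma(e_2)=e_2$, $\sigma(e_3)\neq e_3$) is shown to be pure strongly simple, hence lies in $\PS_3\subset\PS$; and the strict inclusion $\RS_3\varsubsetneq\PS_3$ established in that same theorem certifies that it does not belong to $\RS$. Next, for $\PS\varsubsetneq\Ss$, I would use the algebra $A_1$ of Examples \ref{e5}(1): since $A_1$ is simple it lies in $\SSH$ by Theorem \ref{t4}, and since $\HS(A_1)\neq 0$ it lies in $\Ss$, whereas every twisting map of $A_1$ has zero determinant, so $\HS(A_1)\cap\operatorname{GL}(A_1)=0$ and $A_1\notin\PS$. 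Finally, for $\Ss\varsubsetneq\SSH$, the algebra $A_2$ of Examples \ref{e5}(2) is simple and thus in $\SSH$, yet $\HS(A_2)=0$ places it outside $\Ss$.

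The genuine content resides entirely in the auxiliary results already proved; assembling them into the strict chain is routine bookkeeping. The one point meriting care is the first strict inclusion, where one must ensure that the exhibited $\PS_3$-algebra admits \emph{no} regular simple Hom-Lie structure whatsoever, rather than merely observing that the displayed $\sigma$ fails to be multiplicative. This is precisely the strictness $\RS_3\varsubsetneq\PS_3$ secured in Theorem \ref{t63}, so no additional argument is needed. Collecting the three witnesses then yields $\RS\varsubsetneq\PS\varsubsetneq\Ss\varsubsetneq\SSH=\MS$, as claimed.
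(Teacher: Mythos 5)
Your assembly is correct and is essentially the argument the paper intends: the corollary follows by combining the obvious inclusions of Remark 7.2, the strictness $\RS_3\varsubsetneq\PS_3$ from Theorem \ref{t63}, the witnesses $A_1$ and $A_2$ of Examples \ref{e5} for $\PS\varsubsetneq\Ss$ and $\Ss\varsubsetneq\SSH$, and the equality $\SSH=\MS$. Your remark that strictness of the first inclusion requires ruling out \emph{every} regular structure on the $3$-dimensional witness, not just the displayed $\sigma$, is exactly the right point of care, and deferring it to Theorem \ref{t63} is what the paper does as well.
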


Finally, we should mention that a related problem to the class $\PS$ and to the above Theorem \ref{t63} was treated by Y. Frégier and A. Gohr in \cite{FG09} for the class of Hom-associative algebras. They proved that under a mild condition, of being left (resp. right or two-sided) weakly unital, a Hom-associative algebra $(\A,\mu,\alpha)$ with a bijective twisting map $\alpha$ is the Yau's twist of some associative algebra.
Where an algebra $(\A,\mu)$ is said to be Hom-associative with respect to the twisting map $\alpha$, if it satisfies the following Hom-associativity identity
\begin{align}
    \mu\circ(\mu\otimes\alpha)=\mu\circ(\alpha\otimes\mu)
\end{align}
and it is said to be left (resp. right, or two-sided) weakly unital, if 
\begin{align}
    \exists c\in \A,~\mu(c,x)=\alpha(x),~~\forall x\in \A
\end{align}
(resp. $ \exists c\in \A,~\mu(x,c)=\alpha(x),~~\forall x\in \A$, or $c$ is a left and right weak unity). \\
So that, if in addition $\alpha$ is bijective then $(\A,\alpha^{-1}\circ\mu)$ is an associative algebra with a left (resp. right or two-sided) unity element $c$, cf. \cite[Proposition 2.1]{FG09}.


\section*{Acknowledgments}
I thank professor Yves de Cornulier for kindly pointing me the result in the Theorem \ref{t55}, its key argument and the Remark \ref{r54}. Also, I am grateful to professor Manar Hayyani for the enlightening discussions and its useful comments.


\end{document}